\newtheorem{thm}{\textbf Theorem}[section]
\newtheorem{lem}[thm]{\textbf Lemma}
\newtheorem{rem}[thm]{\textbf Remark}
\newtheorem{prop}[thm]{\textbf Proposition}
\newtheorem{defin}[thm]{\textbf Definition}
\newtheorem{assum}[thm]{\textbf Assumption}
\newtheorem{exa}[thm]{\textbf Example}
\newtheorem{condition}[thm]{\textbf Condition}
\newcommand{\be}{\begin{eqnarray*}}
\newcommand{\ee}{\end{eqnarray*}}
\begin{document}

\title{\bf Fukushima type decomposition for semi-Dirichlet forms\footnote{ 
We are grateful to the support of 973 project (2011CB808000),
NCMIS, NSFC (11021161), and NSERC (Grant No. 311945-2013). }}
  \author{Zhi-Ming Ma, Wei Sun and Li-Fei Wang}
   \date{}
\maketitle

\noindent \textbf{Abstract.} We present a Fukushima type
decomposition in the setting of general quasi-regular
semi-Dirichlet forms. The decomposition is then employed to give a
transformation formula for martingale additive functionals.
Applications of the results to some concrete examples of
semi-Dirichlet forms are given at the end of the paper. We discuss
also the uniqueness question about Doob-Meyer decomposition on
optional sets of interval type. \vskip 0.3cm \noindent \textbf{Key
words and phrases.} Fukushima type decomposition,
 quasi-regular semi-Dirichlet forms, stochastic sets of interval type,
transformation formula for martingale additive functionals.\vskip
0.3cm \noindent \textbf{2000 Mathematics Subject Classification.}
Primary 31C25; Secondary 60J25.

\section[short title]{Introduction}

The celebrated Fukushima's decomposition and related
transformation rules play the roles of Doob-Meyer decomposition
and It$\hat{\rm o}$'s formula in the framework of Dirichlet forms.
They have been used to investigate the properties of a large class
of stochastic processes that are not semi-martingales such as
additive functionals of Brownian motion which are not necessarily
of bounded variation  (cf. e.g. \cite{TZ}, \cite{CF} and
references therein). Fukushima's decomposition was originally
established for regular symmetric Dirichlet forms (cf. \cite{Fu79}
and \cite[Theorem 5.2.2]{Fu94}) and then extended to the
non-symmetric and quasi-regular cases (cf. \cite[Theorem
5.1.3]{oshima} and \cite[Theorem VI.2.5]{MR92}).  Suppose that
$({\cal E},D({\cal E}))$ is a quasi-regular Dirichlet form on
$L^2(E;m)$ with associated Markov process $((X_t)_{t\ge 0},
(P_x)_{x\in E_{\Delta}})$ (we refer the reader to
\cite{Fu94,MR92,MR95} for notations and terminologies of this
paper). If $u\in D({\cal E})$, then Fukushima's decomposition
tells us that there exist a unique martingale additive functional
(MAF in short) $M^{[u]}$ of finite energy and a continuous
additive functional $N^{[u]}$ of zero energy such that
\begin{equation}\label{Doob}
\tilde{u}(X_t)-\tilde{u}(X_0)=M^{[u]}_t+N^{[u]}_t.
\end{equation}
Hereafter $\tilde{u}$ denotes an ${\cal E}$-quasi-continuous
$m$-version of $u$.

Compared with Dirichlet form, semi-Dirichlet form is a more
general framework arising from various applications. In the
viewpoint of applications, and also by the interests of the theory
its own, it is natural to ask if we can extend Fukushima's
decomposition from the setting of Dirichlet forms to that of
semi-Dirichlet forms. For example, do we have Fukushima's
decomposition for the following simple local semi-Dirichlet form?
\begin{eqnarray*}
\mathcal{E}(u,v)=\int^{1}_{0}u'v'dx+\int^{1}_{0}\sqrt{x}u'vdx,\ \
u,v\in D(\mathcal{E}):=H^{1,2}_{0}(0,1).
\end{eqnarray*}
Note that the assumption of the existence of dual Markov process
plays a crucial role in Fukushima's decomposition for Dirichlet
forms. In fact, without that assumption, the usual definition of
energy of AFs is questionable. Here we would like to point out
that although Fukushima's decomposition was even considered for
generalized Dirichlet forms (cf. \cite{T} and \cite{Sun}), which
is a more general framework than semi-Dirichlet forms (see
\cite{WSta}), up to now Fukushima's decomposition for generalized
Dirichlet forms has only been given under the additional
assumption that their dual forms are also sub-Markovian. For a
quasi-regular semi-Dirichlet form $({\cal E},D({\cal E}))$, we may
use the semi-$h$ transform method to associate  $({\cal E},D({\cal
E}))$ with a sub-Markovian dual form (cf. \cite{HMS}). However,
without imposing further assumptions, we cannot expect to obtain
Fukushima's decomposition for general $u\in D({\cal E})$; we can
only expect to obtain the decomposition (\ref{Doob}) for functions
$u$ in the domain of the generator of $({\cal E},D({\cal E}))$,
which is just the classical Doob-Meyer decomposition.

To our knowledge, the paper
 \cite{MMS} appears to be the first publication on
the Fukushima type decomposition  in the semi-Dirichlet forms
setting without assuming that the dual form is sub-Markovian. In
that paper the authors introduced a condition of local control
(cf. Condition \ref{assum2} below) and under the condition they
obtained the Fukushima type decomposition for $u\in D({\cal
E})_{loc}$ where $({\cal E},D({\cal E}))$ is  a local
semi-Dirichlet form. The main method employed in \cite{MMS} is the
localization and pasting technique. For a non-local semi-Dirichlet
form, the jump part of $M^{[u]}$ is in general not locally
consistent, which causes some extra difficulty in implementing the
localization and pasting technique. Afterwards, one of the authors
of the present paper investigated further in \cite{W} on the
Fukushima type decomposition for general quasi-regular
semi-Dirichlet forms. Motivated by some idea of Kuwae
\cite{kuwae2010} and employing also the localization and pasting
technique, he obtained the Fukushima type decomposition for $u\in
D({\cal E})_{loc}$ under a suitable condition (S) (see Theorem
\ref{thm3.2} below). Meanwhile Professor Oshima sent us a
manuscript of his new book \cite{oshima13}, in which he proved
Fukushima's decomposition for $u\in D({\cal E})_{b}$ in the
setting of regular semi-Dirichlet forms satisfying his condition
$(\mathcal{E}.5)$. The main techniques employed by Oshima
 in developing Fukushima's decomposition are the weak sense
 energy and his genius auxiliary bilinear form, different from the localization and pasting technique employed in \cite{MMS} and \cite{W}.

In this paper we shall report and develop further the Fukushima
type decomposition based on \cite{W}, and discuss some related
topics. Let $({\cal E},D({\cal E}))$ be a quasi-regular
semi-Dirichlet form which is not necessarily local. We show that
under a suitable assumption (i.e. Assumption \ref{assum1} below),
a function $u\in {D(\mathcal{E})}_{loc}$ admits a Fukushima type
decomposition if and only if it satisfies Condition (S), and the
decomposition is unique. Roughly speaking, here $u$ admits a
Fukushima type decomposition means that
$$
\tilde{u}(X_{t})-\tilde{u}(X_{0})=M^{[u]}_{t}+N^{[u]}_{t},
$$
where $M^{[u]}$ is a locally square integrable MAF on the set
$I(\zeta):=[\![0,\zeta[\![\cup[\![\zeta_i]\!],$ with $\zeta$ being
the lifetime of $X$ and  $\zeta_i$ the totally inaccessible part
of $\zeta;$ and $N^{[u]}$ is a local AF which is continuous and
has zero quadratic variation on $I(\zeta)$. For details see
Theorem \ref{thm3.2} below. It is worth to point out that
Assumption \ref{assum1} mentioned above is weaker than the
condition of local control in \cite{MMS} and the condition (${\cal
E}.5$) in \cite{oshima13}. We are very grateful to Professor
Oshima for sending us his new book \cite{oshima13}. The condition
(${\cal E}.5$) in \cite{oshima13} stimulated us to formulate
Assumption \ref{assum1}.

The reader might notice that in the above description we used
$I(\zeta)$ instead of $[\![0,\zeta[\![,$ the latter is customarily
used in the literature. The reason of this variation is that we
discovered that the decomposition on $I(\zeta)$ is unique, but it
may fail to be unique on $[\![0,\zeta[\![$.  This difference is
essentially due to the fact  that $I(\zeta)$ is a  predictable set
of interval type while $[\![0,\zeta[\![$ is not necessarily
predictable. This discovery  exposes not only an oversight in the
previous paper \cite{MMS}, but also similar oversights in the
literature e.g. \cite{Chen} and \cite{kuwae2010}. The oversight
may be traced back even to Theorem 8.26 of the book \cite{HWY},
which exposes a question about the uniqueness of Doob-Meyer
decomposition on optional sets of interval type. We shall discuss
this question in detail in Section \ref{sec:uniqueness} below.

The rest of the paper is organized as follows. In Section
\ref{Sec:Fukushima}, we present a general  Fukushima type
decomposition for semi-Dirichlet forms. We divide it into two
subsections. In Subsection \ref{1} we present basic settings and
statement of the theorem, and provide some discussions and remarks
about the theorem. In Subsection \ref{2}, we give the proof of the
theorem. In Section \ref{sec:uniqueness}, we discuss in detail the
question about the uniqueness of Doob-Meyer decomposition on
optional sets of interval type. In Section \ref{sec:transform}, we
give a transformation formula for MAFs based on the Fukushima type
decomposition. In Section \ref{Sec:example}, we apply our results
to two concrete examples of semi-Dirichlet forms appearing in
recent papers.

\section{Fukushima type decomposition}\label{Sec:Fukushima} \setcounter{equation}{0}
\subsection{Statement of the theorem and discussions }\label{1}
The basic setting of this paper is the same as that in \cite{MMS}
with some necessary modifications, e.g., $({\cal E},D({\cal E}))$
in this paper is not assumed to be local. To fix the notations and
also for the convenience of the reader, below we restate our
setting of which some contents are taken from \cite{MMS}. Let $E$
be a metrizable Lusin space and $m$ a $\sigma$-finite positive
measure on its Borel $\sigma$-algebra ${\cal B}(E)$. We consider a
quasi-regular semi-Dirichlet form  $({\cal E},D({\cal E}))$ on
$L^2(E;m)$. Hereafter for notations and terminologies related to
quasi-regular semi-Dirichlet forms we refer to \cite{MR95}. Denote
by $(T_{t})_{t\geq0}$ and $(G_{\alpha})_{\alpha\geq0}$ (resp.
$(\hat{T}_{t})_{t\geq0}$ and $(\hat{G}_{\alpha})_{\alpha\geq0}$)
the semigroup and resolvent (resp. co-semigroup and co-resolvent)
associated with $({\cal E},D({\cal E}))$. Let ${\bf
M}=(\Omega,{\cal F},({\cal F}_t)_{t\ge 0}, (X_t)_{t\ge
0},(P_x)_{x\in E_{\Delta}})$ be an $m$-tight special standard
process which is properly associated with $({\cal E},D({\cal E}))$
in the sense that $P_tf$ is an ${\cal E}$-quasi-continuous
$m$-version of $T_tf$ for all $f\in {\cal B}_b(E)\cap L^2(E;m)$
and all $t>0$, where $(P_t)_{t\ge 0}$ denotes the semigroup
associated with ${\bf M}$ (cf. \cite[Theorem 3.8]{MR95}).

Similar to the symmetric case, in the semi-Dirichlet forms setting
there is also a one-to-one correspondence between the family of
all equivalent classes of positive continuous additive functionals
and the family $S$ of smooth measures. The contents below
concerning positive continuous additive functionals and $S$ are
taken from \cite{MMS}.  We remark that the reader can now find
more detailed descriptions and discussions in \cite{oshima13} on
the potential theory of semi-Dirichlet forms including the
correspondence between positive continuous additive functionals
and smooth measures.

Recall that a positive measure $\mu$ on $(E, {\cal B}(E))$ is
called {\it smooth} (w.r.t. $({\cal E},D({\cal E}))$), denoted by
$\mu \in S,$  if $\mu(N)=0$ for each ${\cal E}$-exceptional set
$N\in {\cal B}(E)$ and there exists an ${\cal E}$-nest $\{F_k\}$
of compact subsets of $E$ such that
$$
\mu(F_k)<\infty\ {\rm for\ all}\ k\in\mathbb{N}.
$$
A family $(A_t)_{t\ge 0}$ of functions on $\Omega$ is called an
{\it additive functional} (AF in short) of ${\bf M}$ if:

(i) $A_t$ is ${\cal F}_t$-measurable for all $t\ge 0$.

(ii) There exists a defining set $\Lambda\in{\cal F}$ and an
exceptional set $N\subset E$ which is ${\cal E}$-exceptional such
that $P_x[\Lambda]=1$ for all $x\in E\backslash N$,
$\theta_t(\Lambda)\subset\Lambda$ for all $t>0$ and for each
$\omega\in \Lambda$, $t\rightarrow A_t(\omega)$ is right
continuous on $(0,\infty)$ and has left limits on
$(0,\zeta(\omega))$, $A_0(\omega)=0$, $|A_t(\omega)|<\infty$ for
$t<\zeta(\omega)$, $A_t(\omega)=A_{\zeta}(\omega)$ for
$t\ge\zeta(\omega)$, and
\begin{eqnarray}\label{additivity}
A_{t+s}(\omega)=A_t(\omega)+A_s(\theta_t\omega),~~~~\forall~
s,t\ge 0.
\end{eqnarray}
Hereafter $\zeta$ denotes the lifetime of $X:=(X_t)_{t\geq 0}$.

Two AFs $A=(A_t)_{t\ge 0}$ and $B=(B_t)_{t\ge 0}$ are said to be
equivalent, denoted by $A=B,$ if they have a common defining set
$\Lambda$ and a common exceptional set $N$ such that
$A_t(\omega)=B_t(\omega)$ for all $\omega\in\Lambda$ and $t\ge 0$.
An AF $A=(A_t)_{t\ge 0}$ is called a continuous AF (CAF in short)
if $t\rightarrow A_t(\omega)$ is continuous on $(0,\infty).$ It is
called a positive CAF (PCAF in short) if $A_t(\omega)\ge 0$ for
all $t\ge 0$, $\omega\in\Lambda$.

\begin{lem}\label{lem:Revuz}(cf. \cite[Theorem A.8]{MMS}, see also \cite[Section 4.1]{oshima13})
Let $A$ be a PCAF. Then there exists a unique $\mu\in S$, which is
referred to as the {\it Revuz measure} of $A$ and is denoted by
$\mu_A,$ such that:

For any $\gamma$-co-excessive function $g$ $(\gamma\geq0)$ in
$D({\cal E})$ and $f\in \mathcal{B}^{+}(E)$,
$$
 \lim_{t\downarrow0}\frac{1}{t}E_{g\cdot
 m}((fA)_{t})=<f\cdot\mu,\tilde{g}>.
$$
Conversely, let $\mu\in S$, then there exists a unique (up to the
equivalence) PCAF $A$ such that $\mu =\mu_A.$
\end{lem}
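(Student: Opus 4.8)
The plan is to prove the two assertions separately, running the classical Revuz argument but systematically replacing excessive functions and the semigroup by their co-versions, since in the semi-Dirichlet setting the natural pairing is with the $\gamma$-co-excessive function $g$ and the co-semigroup $(\hat P_t)$. For the first assertion, fix $f\in\mathcal{B}^+(E)$ and a $\gamma$-co-excessive $g\in D(\mathcal{E})$ and set $S_t:=E_{g\cdot m}\big((fA)_t\big)=E_{g\cdot m}\big(\int_0^t f(X_s)\,dA_s\big)$. The first step is existence of $\lim_{t\downarrow0}\tfrac1t S_t$. I would split $\int_0^{t+s}=\int_0^t+\int_t^{t+s}$, push the second piece through the shift $\theta_t$ using the additivity (\ref{additivity}), and then move the time shift onto the initial measure via the Markov property and the adjoint relation $E_{g\cdot m}(\phi(X_t))=\langle\phi,\hat P_t g\rangle_m$, obtaining $S_{t+s}=S_t+E_{(\hat P_t g)\cdot m}\big(\int_0^s f\,dA\big)$. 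Since $g$ is $\gamma$-co-excessive one has $\hat P_t g\le e^{\gamma t}g$ with $\hat P_t g\uparrow g$, so $S_{t+s}\le S_t+e^{\gamma t}S_s$, making $t\mapsto S_t$ essentially subadditive; a Fekete-type argument then gives the limit (equal to $\sup_{t>0}S_t/t$). Monotonicity in $f$ and $g$ lets me define this limit simultaneously as a positive linear functional $f\mapsto\langle f\cdot\mu,\tilde g\rangle$, and letting $g$ range over a rich family of co-excessive functions (e.g.\ $\hat G_\alpha h$, $h>0$) divides out $\tilde g$ and pins down a single measure $\mu=\mu_A$.

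Next I would check $\mu_A\in S$. That $\mu_A$ charges no $\mathcal{E}$-exceptional set is immediate because $A$ is carried off its exceptional set; for the nest with $\mu_A(F_k)<\infty$ I would use that the $\beta$-potential $U_A^\beta 1(x)=E_x\big(\int_0^\infty e^{-\beta t}\,dA_t\big)$ is finite q.e., which, combined with the quasi-regularity of the form, produces an $\mathcal{E}$-nest of compact sets on which $\mu_A$ is finite (this resolvent form of the limit is also the convenient one via an Abelian/Tauberian passage). Uniqueness of $\mu_A$ is then automatic: the stated limit formula determines $\langle f\cdot\mu,\tilde g\rangle$ for all $f\in\mathcal{B}^+(E)$ and all co-excessive $g$, and smooth measures are determined by such pairings.

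For the converse, given $\mu\in S$ I would first treat the subclass $S_{00}$ of measures of finite energy integral with bounded $\alpha$-potential, for which the $\alpha$-potential $U_\alpha\mu\in D(\mathcal{E})$ and the PCAF $A$ can be constructed directly from the resolvent so that its $\alpha$-potential equals $\widetilde{U_\alpha\mu}$; that $\mu_A=\mu$ is then a computation with the formula already established. For general $\mu\in S$ I would use the defining nest $\{F_k\}$ to approximate $\mu$ from below by measures $\mu_k\in S_{00}$, build the corresponding PCAFs $A^k$, prove they are consistent on the increasing sets $F_k$ via energy estimates, and pass to the limit $A=\lim_k A^k$, verifying right-continuity, additivity (\ref{additivity}) and $\mu_A=\mu$. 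Uniqueness up to equivalence follows from injectivity of $A\mapsto U_A^\beta$: two PCAFs with the same Revuz measure have identical $\beta$-potentials for every $\beta$, hence the same Laplace transforms of $dA_t$ and $dB_t$ under $P_x$ for q.e.\ $x$, forcing $A=B$.

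The main obstacle is the converse construction in the non-symmetric setting. Without a symmetric dual one cannot invoke self-adjointness to identify potentials, so the $S_{00}$ approximation, the consistency of the $A^k$, and the pasting along the nest must all be controlled through the resolvent and its adjoint, estimating the error terms by the co-excessive pairing rather than by symmetry. The delicate core is showing that the pasted limit is a genuine additive functional (not merely a supermartingale) with Revuz measure exactly $\mu$, which is precisely where the finite-energy approximation and the uniqueness-of-Laplace-transform argument have to be combined with care.
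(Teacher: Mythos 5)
The paper itself does not prove this lemma; it is quoted from \cite[Theorem A.8]{MMS} (see also \cite[Section 4.1]{oshima13}), and your outline follows essentially the same route those sources take: for the direct part, sub-additivity of $t\mapsto E_{g\cdot m}((fA)_t)$ obtained by shifting through $\theta_t$ and using the duality $E_{g\cdot m}(\phi(X_t))=\langle\phi,\hat T_t g\rangle_m$ together with the co-excessivity bound $\hat T_t g\le e^{\gamma t}g$; for the converse, construction on a class of finite-energy measures followed by pasting along the defining nest, with the reference co-excessive function $\hat h=\hat G_1\phi$ doing the work that symmetry does in the classical case. So the architecture is the right one.

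Two steps, however, are not correct as literally written. First, your claim that $U_A^\beta 1(x)=E_x\left(\int_0^\infty e^{-\beta t}\,dA_t\right)$ is finite q.e.\ for every PCAF is false: already for $A_t=\int_0^t c(X_s)\,ds$ with $c\in L^1_{loc}$ one can have $G_\beta c\equiv\infty$. The standard repair is to replace $1$ by a strictly positive $f$ chosen so that $E_{\hat h\cdot m}\left(\int_0^\infty e^{-t}f(X_t)\,dA_t\right)<\infty$ and to extract the $\mathcal{E}$-nest with $\mu_A(F_k)<\infty$ from the quasi-continuity of that finite potential. Second, ``letting $g$ range over a rich family and dividing out $\tilde g$'' is where the real content of the direct part lies: one must first fix a single reference co-excessive function (or use the energy functional) to define $\mu$, prove that the resulting set function is countably additive, and only then verify the stated identity for \emph{every} $\gamma$-co-excessive $g\in D(\mathcal{E})$ and every $f\in\mathcal{B}^+(E)$; as written this is an assertion rather than an argument. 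Neither issue changes the overall strategy, but both need to be filled in before the sketch becomes a proof.
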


Throughout this paper, we fix a function $\phi\in L^{2}(E;m)$ with
 $0<\phi\leq1$ $m{\textrm{-}}a.e.$ and set $h=G_{1}\phi$,
 $\hat{h}=\hat{G}_{1}\phi$.  Denote $\tau_B:=\inf\{t>0\,|\, X_t\notin B\}$ for $B\subset E$.
Let $V$ be a quasi-open subset of $E$. We denote by
$X^V=(X^V_t)_{t\ge 0}$ the part process of $X$ on $V$ and denote
by $(\mathcal{E}^{{V}},D(\mathcal{E})_{{V}})$ the part form of
$(\mathcal{E},D(\mathcal{E}))$ on $L^2(V;m)$. It is known that
$X^V$ is a standard process and
$(\mathcal{E}^{{V}},D(\mathcal{E})_{{V}})$ is a quasi-regular
semi-Dirichlet form (cf. \cite{kuwae}). Denote by
$({T}_{t}^{V})_{t\ge 0}$, $(\hat{T}_{t}^{V})_{t\ge 0}$,
$({G}_{\alpha}^{V})_{\alpha\ge 0}$ and
$(\hat{G}_{\alpha}^{V})_{\alpha\ge 0}$ the semigroup,
co-semigroup, resolvent and co-resolvent associated with
$(\mathcal{E}^{{V}},D(\mathcal{E})_{{V}})$, respectively. One can
check that $\hat{h}|_{V}$ is 1-co-excessive w.r.t.
$(\mathcal{E}^{{V}},D(\mathcal{E})_{{V}})$. Define
$\bar{h}^V:=\hat{h}|_{V}\wedge\hat{G}^{{V}}_{1}\phi$. Then
$\bar{h}^V\in D(\mathcal{E})_{{V}}$ and $\bar{h}^V$ is
 1-co-excessive. Denote $D(\mathcal{E})_{{V},b}:={\cal B}_b(E)\cap D(\mathcal{E})_{{V}}$.

 For an AF $A=(A_t)_{t\ge 0}$ of $X^V$, we define
$$
e^V(A):=\lim_{t\downarrow0}{1\over{2t}}E_{\bar{h}^V\cdot
m}(A_{t}^2)
$$
whenever the limit exists in $[0,\infty]$. Define
 \begin{eqnarray*}
\dot{\mathcal{M}}^V
 &:=&\{M\,|\, M\ \mbox{is an AF of}\ X^{V},\ E_x(M^{2}_t)<\infty, E_x(M_t)=0\\
 &&\ \ \ \ \mbox{for}\ {\rm all}\ t\ge0\ {\rm and}\ {\cal
E}{\textrm{-}q.e.}\ x\in V, e^V(M)<\infty\},
 \end{eqnarray*}
  \begin{eqnarray*}
 \mathcal{N}^V_{c}
 &:=&\{N\,|\, N\ \mbox{is a CAF of}\ X^{V},E_x(|N_t|)<\infty\ \mbox{for}\ {\rm all}\ t\ge0\\
   &&\ \ \ \ {\rm and}\ {\cal
E}{\textrm{-}q.e.}\ x\in V, e^V(N)=0\},
   \end{eqnarray*}
\begin{eqnarray*}
\Theta&:=&\{\{{V_n}\}\,|\, {V_n}\ \mbox{is}\ {\cal
E}{\textrm{-}}\mbox{quasi} {\textrm{-}}\mbox{open},\ {V_n}\subset
V_{n+1}\ {\cal E}
{\textrm{-}q.e.} \\
         &&\ \ \ \ \ \ \ \ \ \ \ \forall~n\in \mathbb{N},\
         \mbox{and}\ E=\cup_{n=1}^{\infty}{V_n}\ {\cal E}{\textrm{-}q.e.}\},
\end{eqnarray*}
and
\begin{eqnarray*}
{D(\mathcal{E})}_{loc}
&:=&\{u\,|\,\exists\ \{V_n\}\in\Theta\ \mbox{and}\ \{u_n\}\subset D(\mathcal{E})\nonumber\\
  &&\ \ \ \ \ \ \ \mbox{such that }\ u=u_n\ m{\textrm{-}a.e.}\ \mbox{on}\ V_n,
  ~\forall~n\in \mathbb{N}\}.
\end{eqnarray*}
In what follows we shall employ the notion of local AFs introduced
in \cite{Fu94} as follows.

\begin{defin} (cf. \cite[page 271]{Fu94})\label{localAF}
A family $A=(A_t)_{t\ge 0}$ of functions on $\Omega$ is called a
{\it local AF} of ${\bf M},$ if $A$ satisfies all the requirements
for an AF as stated in above (i) and (ii), except that the
additivity property (\ref{additivity}) is
 required only for $s,t\ge 0$ with
$t+s<\zeta(\omega)$.
\end{defin}

Two local AFs $A^{(1)}$, $A^{(2)}$ are said to be equivalent if
for
 ${\cal E}{\textrm{-}q.e.}\ x\in E$, it holds that
$$
P_x(A^{(1)}_t=A^{(2)}_t;t<\zeta)=P_x(t<\zeta), ~~\forall~t\geq0.
$$
We now define
\begin{eqnarray*}
\dot{\mathcal{M}}_{loc}
 &:=&\{M\,|\, M\ \mbox{is a local AF of}\ {\bf M},\ \exists\ \{V_n\},\{E_n\}\in\Theta\ {\rm and}\ \{M^n\,|\,M^n\in\dot{\mathcal{M}}^{V_n}\}\\
 &&\ \ \ \ \ \ \ \ \mbox{such that}\ E_n\subset V_n,\ M_{t\wedge\tau_{E_n}}=M^{n}_{t\wedge\tau_{E_n}},\ t\ge0,\ n\in\mathbb{N} \}
 \end{eqnarray*}
 and
 \begin{eqnarray*}
{\mathcal{L}}_{c}&:=&\{N\,|\, N\ \mbox{is a local AF of}\ {\bf M}\ ,\ \exists\ \{E_n\}\in\Theta\ \mbox{such that}\ t\rightarrow N_{t\wedge\tau_{E_n}}\\
&&\ \ \ \ \ \ \mbox{  is continuous and of zero quadratic
variation},\  n\in\mathbb{N} \}.
 \end{eqnarray*}

We use $\zeta_i$ to denote the totally inaccessible part of
$\zeta$, by which we  mean that $\zeta_i$ is an $\{{\cal
F}_t\}$-stopping time and is the totally inaccessible part of
$\zeta$ w.r.t. $P_{x}$ for ${\cal E}{\textrm{-}q.e.}\ x\in E$. In
Section 3 below we shall give a proof for the existence and
uniqueness of such $\zeta_i$, where the uniqueness is in the sense
of $P_{x}{\textrm{-}a.s.}$ for ${\cal E}{\textrm{-}q.e.}\ x\in E$.
Write $I(\zeta):=[\![0,\zeta[\![\cup[\![\zeta_i]\!]$. We can show
that there exists a $\{V_n\}\in\Theta$ such that for any
$\{U_n\}\in\Theta$, $I(\zeta)=\cup_n [\![0,\tau_{V_n\cap U_n}]\!]$
$ P_{x}{\textrm{-}a.s.}\ \ {\rm for}\ {\cal E}{\textrm{-}q.e.}\
x\in E$ (see Proposition \ref{prop00} below). Therefore $I(\zeta)$
is a predictable set of interval type
 (cf. \cite[Theorem 8.18]{HWY}).
 In this paper a local AF $M$ is called a locally square integrable MAF on $I(\zeta)$,
 denoted by $M\in {\mathcal{M}}^{I(\zeta)}_{loc},$  if $M\in ({\mathcal{M}}^2_{loc})^{I(\zeta)}$ in the sense of \cite[Definition 8.19]{HWY}.

Denote by $J(dx,dy)$ the jump measure of $({\cal E},D({\cal E}))$
(cf. \cite{HC06}). Let $(N(x,dy),H_s)$ be a L\'{e}vy system of
$X$. Then we have $J(dy,dx)=N(x,dy)\mu_H(dx)$.

We put the following assumption:
\begin{assum}\label{assum1}
There exist $\{{V_n}\}\in \Theta$ and locally bounded function
$\{C_n\}$ on $\mathbb{R}$ such that for each $n\in\mathbb{N}$, if
$u,v\in D(\mathcal{E})_{{V_n},b}$ then $uv\in D(\mathcal{E})$ and
\begin{eqnarray*}
\mathcal{E}(uv,uv)\leq C_n(\|u\|_{\infty}+\|v\|_{\infty})({\cal
E}_1(u,u)+{\cal E}_1(v,v)).
\end{eqnarray*}
\end{assum}

Now we can state the main theorem of this section.
\begin{thm}\label{thm3.2} Suppose that
$(\mathcal{E},D(\mathcal{E}))$ is a quasi-regular semi-Dirichlet
form on $L^{2}(E;m)$ satisfying Assumption \ref{assum1}. Then for
$u\in {D(\mathcal{E})}_{loc}$ the following two assertions are
equivalent to each other.

(i) $u$ admits a Fukushima type decomposition. That is, there
exist $M^{[u]}\in{\mathcal{M}}^{I(\zeta)}_{loc}$ and $N^{[u]}\in
{\mathcal{L}}_{c}$ such that
\begin{eqnarray}\label{new3}
\tilde{u}(X_{t})-\tilde{u}(X_{0})=M^{[u]}_{t}+N^{[u]}_{t},\ \
t\ge0,\ \ P_{x}{\textrm{-}a.s.}\ \ {\rm for}\ {\cal
E}{\textrm{-}q.e.}\ x\in E.
\end{eqnarray}

(ii) $u$ satisfies Condition (S) specified below.
\begin{eqnarray*}
(S):\  \  \  \mu_u(dx):=\int_E(\tilde u(x)-\tilde u(y))^2J(dy,dx)\
\mbox{is a smooth measure}.
\end{eqnarray*}
Moreover, if $u$ satisfies Condition (S), then the decomposition
(\ref{new3}) is unique up to the equivalence of local AFs, and the
continuous part of $M^{[u]}$ belongs to $\dot{\mathcal{M}}_{loc}$.
\end{thm}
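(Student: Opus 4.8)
The plan is to prove the two implications separately, then treat uniqueness and the structural refinement.

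For (i) $\Rightarrow$ (ii) I would read Condition (S) off from the jump structure of the given martingale. Since $M^{[u]}\in\mathcal{M}^{I(\zeta)}_{loc}$ is locally square integrable, its purely discontinuous part $M^{[u],d}$ has jumps $\Delta M^{[u]}_s=\tilde{u}(X_s)-\tilde{u}(X_{s-})$, and its predictable quadratic variation $\langle M^{[u],d}\rangle$ is, on each stopped piece, a PCAF. Applying the L\'{e}vy system $(N(x,dy),H_s)$ to the functional $\sum_{s}(\tilde{u}(X_s)-\tilde{u}(X_{s-}))^2$, the Revuz measure of $\langle M^{[u],d}\rangle$ is exactly $\int_E(\tilde{u}(x)-\tilde{u}(y))^2N(x,dy)\mu_H(dx)=\mu_u$, using $J(dy,dx)=N(x,dy)\mu_H(dx)$. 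By Lemma \ref{lem:Revuz} the Revuz measure of a PCAF is smooth, so $\mu_u\in S$, which is Condition (S).

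For the constructive direction (ii) $\Rightarrow$ (i) I would localize along the nest $\{V_n\}$ furnished by Assumption \ref{assum1} and use the localization and pasting technique. Fix $n$, replace $u$ by a bounded function in $D(\mathcal{E})_{V_n,b}$, and work on the part form $(\mathcal{E}^{V_n},D(\mathcal{E})_{V_n})$, where the Doob--Meyer decomposition is available for functions in the domain of the generator, in particular for the resolvent approximations $\beta G^{V_n}_\beta u$. Assumption \ref{assum1} plays a double role here: its product estimate guarantees that $u^2$ and $uv$ lie in $D(\mathcal{E})$, which is what is needed to define the continuous quadratic variation (energy measure) of the martingale part through a carr\'{e}-du-champ-type identity, while its quantitative bound controls the energy $e^{V_n}$ and shows that the approximating MAFs are Cauchy, converging to some $M^n\in\dot{\mathcal{M}}^{V_n}$. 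Condition (S) enters precisely at the jump part: smoothness of $\mu_u$ means the jump contribution to the energy is finite, so the purely discontinuous part of the limit is a genuine locally square integrable MAF and not merely a local AF.

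The main obstacle is the pasting step together with the passage from $[\![0,\zeta[\![$ to $I(\zeta)$. Because $(\mathcal{E},D(\mathcal{E}))$ need not be local, the jump parts of the $M^n$ are not locally consistent, so I would paste the continuous parts and the compensated jump parts separately: the continuous parts glue directly through the stopping times $\tau_{E_n}$, while the jump martingales are reconstructed globally from their common Revuz measure $\mu_u$ (available by (S)) via the Revuz correspondence of Lemma \ref{lem:Revuz}. The behaviour at the totally inaccessible part $\zeta_i$ of the lifetime is recovered in the same way: (S) supplies the compensator of the terminal jump and thereby places $M^{[u]}$ in $\mathcal{M}^{I(\zeta)}_{loc}$ rather than only on $[\![0,\zeta[\![$, using that $I(\zeta)$ is a predictable set of interval type. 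Setting $N^{[u]}:=\tilde{u}(X_\cdot)-\tilde{u}(X_0)-M^{[u]}$, the same energy estimates show $N^{[u]}$ is continuous and of zero quadratic variation on each $\tau_{E_n}$-stopped piece, so $N^{[u]}\in\mathcal{L}_c$.

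Finally, for uniqueness suppose $M^{[u]}+N^{[u]}=\tilde{M}+\tilde{N}$ are two decompositions. Then $L:=M^{[u]}-\tilde{M}=\tilde{N}-N^{[u]}$ is simultaneously a locally square integrable MAF on $I(\zeta)$ and a continuous local AF of zero quadratic variation. On the predictable set of interval type $I(\zeta)$ the Doob--Meyer decomposition is unique (this is exactly the point established in Section \ref{sec:uniqueness}, and is the reason for working on $I(\zeta)$ rather than $[\![0,\zeta[\![$, where uniqueness can fail), so $L$ is a continuous martingale of zero energy and hence $L=0$. For the last assertion, decompose $M^{[u]}$ into its continuous and purely discontinuous martingale parts; the energy estimate from Assumption \ref{assum1} shows the continuous part has finite local energy $e^{V_n}$ on each $V_n$, which places it in $\dot{\mathcal{M}}_{loc}$.
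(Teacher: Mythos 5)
Your overall architecture coincides with the paper's: localization along $\{V_n\}$, the part-process decomposition of Lemma \ref{thm3.6}, pasting of the continuous parts through the stopping times $\tau_{E_n}$ (whose consistency is exactly the content of Lemma \ref{lem3.7}), the Revuz-measure argument for (i)$\Rightarrow$(ii), and uniqueness via the zero-quadratic-variation argument on the covering $I(\zeta)=\cup_n[\![0,\tau_{E_n}]\!]$ supplied by Proposition \ref{prop00}. The direction (i)$\Rightarrow$(ii) and the uniqueness step are essentially right as you state them, up to two small points the paper is careful about: the Revuz measure of $\langle M^{[u],d}\rangle$ is $\mu'_u(dx)=\int_{E_{\Delta}}(\tilde u(x)-\tilde u(y))^2N(x,dy)\mu_H(dx)$, which only \emph{dominates} $\mu_u$ (the killing term must be separated off), and the PCAF at first lives only on $I(\zeta)$ and must be extended to a genuine PCAF before Lemma \ref{lem:Revuz} applies.

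The genuine gap is in your construction of the purely discontinuous part. You propose to ``reconstruct the jump martingales globally from their common Revuz measure $\mu_u$ via the Revuz correspondence of Lemma \ref{lem:Revuz}.'' That lemma converts a smooth measure into a PCAF, so at best it hands you the candidate compensator $\langle M^{[u],d}\rangle$; it cannot produce the martingale itself, since a purely discontinuous martingale is determined by its jumps, not by the Revuz measure of its bracket. What is actually needed, and what the paper does, is to build $M^{[u],d}$ directly from the jumps of $\tilde u(X)$: truncate by jump size, setting $\phi_l(x,y)=(\tilde u(y)-\tilde u(x))1_{\{|\tilde u(x)-\tilde u(y)|>1/l\}}$ and
\begin{eqnarray*}
M^{l}_{t}=\sum_{0<s\leq t}\phi_{l}(X_{s-},X_{s})-\int^{t}_{0}\int_{E_{\Delta}}\phi_{l}(X_{s},y)N(X_{s},dy)dH_{s},
\end{eqnarray*}
show via the L\'evy system and Condition (S) that $E_{\nu}[(M^{l}_{t\wedge\tau_{E_n}})^{2}]$ is controlled by $\int_{E_n}\widetilde{\hat{h}}(x)\int_{E_{\Delta}}\phi_l^{2}(x,y)N(x,dy)\mu_{H}(dx)$, and then use Doob's maximal inequality and Borel--Cantelli to extract a subsequence converging uniformly on compacts $P_x$-a.s.\ for q.e.\ $x$; the limit $L^n$ is the purely discontinuous MAF on $[\![0,\tau_{E_n}]\!]$. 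This limiting argument is precisely where Condition (S) does its work, and it is missing from your plan. A second, smaller omission: after subtracting $M^{[u],c}+M^{[u],d}$ you must still prove that $N^{[u]}$ has zero quadratic variation; ``the same energy estimates'' do not give this directly because $N^{[u]}$ is only a local AF, not an element of $\mathcal{N}^{V_n}_{c}$. The paper handles it by comparing with $N^{k,[u_kf_k]}_{t\wedge\tau_{E_n}}$ and showing that the residual purely discontinuous martingale $M^{k,[u_kf_k],d}_{t\wedge\tau_{E_n}}-M^{[u],d}_{t\wedge\tau_{E_n}}+A_t-A^p_t$ has no jumps and therefore vanishes.
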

The proof of Theorem \ref{thm3.2} will be given in the next
subsection.  In the remainder of this subsection we provide some
remarks and discussions about the theorem.

In \cite{MMS}, the authors obtained a Fukushima type decomposition
for $u\in D(\mathcal{E})_{loc}$  where $({\cal E},D({\cal E}))$ is
a local quasi-regular Dirichlet form satisfying the condition of
local control as stated below.

\begin{condition}\label{assum2}
There exists $\{{V_n}\}\in \Theta$ such that for each
$n\in\mathbb{N}$ there exist a  Dirichlet form
$(\eta^{(n)},D(\eta^{(n)}))$ on $L^2(V_n;m)$ and a constant
$C_n>1$ satisfying $D(\eta^{(n)})= D(\mathcal{E})_{{V_n}}$ and for
any $u\in D(\mathcal{E})_{{V_n}}$,
\begin{eqnarray*}
\frac{1}{C_n}\eta^{(n)}_1(u,u)\leq\mathcal{E}_1(u,u)\leq
C_n\eta^{(n)}_1(u,u).
\end{eqnarray*}
\end{condition}
It is clear that Assumption \ref{assum1} is more general than
Condition \ref{assum2}. Hence we have the following remark.
\begin{rem}
Theorem \ref{thm3.2} extends the corresponding result of
\cite{MMS}.
\end{rem}
In \cite{oshima13}, Oshima discussed various topics of regular
semi-Dirichlet forms under his condition $(\mathcal{E}.5)$. In
particular,
 he proved in Theorem 5.1.5 a weak sense of Fukushima's decomposition for
$u\in D({\cal E})_{b}$. Below is the condition $(\mathcal{E}.5)$
of \cite{oshima13} stated in our context. \vskip 0.2cm \noindent
{\bf Condition $(\mathcal{E}.5)$.}  If $u,v \in D(\mathcal{E})$
and $w \in L^2(E;m)$  satisfy $|w(x)-w(y)|\leq |u(x)-u(y)| +
|v(x)-v(y)|~\mbox{and}~|w(x)|\leq |u(x)|+ |v(x)|$ for any $x,y\in
E$, then $w \in D(\mathcal{E})$ and $|\mathcal{E}(w,w)|\leq
K(\mathcal{E}_{1}(u,u)+\mathcal{E}_{1}(v,v))$ for some $K$
depending on $\|u\|_{\infty}$ and $\|v\|_{\infty}.$
 \vskip 0.2cm
It is easy to see that Condition $(\mathcal{E}.5)$ implies the
following condition.
\begin{condition}\label{assum3}
There exists a locally bounded function $C$ on $\mathbb{R}$ such
that if $u,v\in D(\mathcal{E})_b,$ then $uv\in D(\mathcal{E})$ and
\begin{eqnarray}\label{lkj}
\mathcal{E}(uv,uv)\leq C(\|u\|_{\infty}+\|v\|_{\infty})({\cal
E}_1(u,u)+{\cal E}_1(v,v)).
\end{eqnarray}
\end{condition}
\begin{prop}\label{propO}
Suppose that  $({\cal E},D({\cal E}))$ satisfies Condition
\ref{assum3}, then any $u\in D(\mathcal{E})_b$ satisfies Condition
(S), and hence admits a Fukushima type decomposition.
\end{prop}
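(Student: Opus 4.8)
The plan is to reduce the assertion to Theorem \ref{thm3.2} and then verify Condition (S) by a square-field estimate. First I would observe that Condition \ref{assum3} is just the global form of Assumption \ref{assum1}: taking $V_n=E$ for every $n$ gives a sequence in $\Theta$ with $D(\mathcal{E})_{E,b}=D(\mathcal{E})_b$ and constant function $C_n=C$, so Condition \ref{assum3} implies Assumption \ref{assum1}. Since $D(\mathcal{E})_b\subset D(\mathcal{E})\subset D(\mathcal{E})_{loc}$, Theorem \ref{thm3.2} applies to any $u\in D(\mathcal{E})_b$, and the equivalence there reduces the whole statement to showing that such a $u$ satisfies Condition (S); the Fukushima type decomposition then follows immediately from the implication (ii)$\Rightarrow$(i).

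To verify (S) I would work with the symmetric part $\check{\mathcal{E}}(u,v):=\frac{1}{2}(\mathcal{E}(u,v)+\mathcal{E}(v,u))$, which is a symmetric Dirichlet form and therefore admits a Beurling--Deny decomposition into a diffusion measure $\mu^{c}_{\langle u\rangle}$, a killing measure $k$, and a symmetric jumping measure $J^{s}$, related to the Lévy-system measure by $2J^{s}\ge J$. Taking $v=u$ and $v=f$ in Condition \ref{assum3} shows that $u^{2}\in D(\mathcal{E})$ and $uf\in D(\mathcal{E})$ for every $f\in D(\mathcal{E})_b$, so the bilinear expressions below are legitimate. The key step is the algebraic identity obtained by expanding the integrand $2(a-b)(ap-bq)-(a^{2}-b^{2})(p-q)=(a-b)^{2}(p+q)$ with $a=\tilde u(x),\,b=\tilde u(y),\,p=\tilde f(x),\,q=\tilde f(y)$ and using the symmetry of $J^{s}$; for $f\in D(\mathcal{E})_b$ with $f\ge0$ this yields
\[
2\check{\mathcal{E}}(u,uf)-\check{\mathcal{E}}(u^{2},f)=\int_E \tilde f\,d\mu^{c}_{\langle u\rangle}+2\int_E \tilde f\,d\mu^{s}_u+\int_E \tilde u^{2}\tilde f\,dk,
\]
where $\mu^{s}_u(dx):=\int_E(\tilde u(x)-\tilde u(y))^{2}J^{s}(dy,dx)$. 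All three terms on the right are nonnegative, so
\[
\int_E \tilde f\,d\mu^{s}_u\le \check{\mathcal{E}}(u,uf)-\frac{1}{2}\check{\mathcal{E}}(u^{2},f).
\]

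Next I would deduce smoothness. Fixing an $\mathcal{E}$-nest $\{F_k\}$ of compact sets and choosing $f_k\in D(\mathcal{E})_b$ with $0\le f_k\le1$ and $\tilde f_k=1$ q.e. on $F_k$, the displayed inequality, together with the weak sector condition and the quadratic bound of Condition \ref{assum3}, gives $\mu^{s}_u(F_k)\le\int_E\tilde f_k\,d\mu^{s}_u<\infty$ for each fixed $k$; since the jumping part of the energy measure of a quasi-regular form charges no $\mathcal{E}$-exceptional set, $\mu^{s}_u$ is finite along the nest and charges no exceptional set, hence $\mu^{s}_u\in S$. Because $2J^{s}\ge J$ forces $\mu_u\le2\mu^{s}_u$, where $\mu_u$ is the measure in Condition (S), and a positive measure dominated by a smooth measure is itself smooth, $u$ satisfies (S), completing the proof.

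The main obstacle I anticipate is not the reduction but the rigorous justification of the square-field identity for the symmetric part of a genuinely non-symmetric, possibly non-local form: one must secure the Beurling--Deny representation of $\check{\mathcal{E}}$, the product and chain rules $\widetilde{uf}=\tilde u\tilde f$ and $\widetilde{u^{2}}=\tilde u^{2}$ quasi-everywhere that make the algebraic expansion valid, and the fact that $\mu^{s}_u$ charges no exceptional set. A secondary technical point is that the bound on $\int_E\tilde f\,d\mu^{s}_u$ carries a factor $\|f\|_{\infty}$, so it is not of finite-energy-integral type; this is precisely why I pass through finiteness on the compact sets of a nest, testing only against cutoffs $f_k$ with $\|f_k\|_{\infty}\le1$, rather than attempting to produce a $1$-potential for $\mu^{s}_u$ directly.
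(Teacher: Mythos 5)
Your reduction to Theorem \ref{thm3.2} is exactly the paper's first step (Condition \ref{assum3} is a special case of Assumption \ref{assum1}, so only Condition (S) needs to be checked), and the target inequality you aim for, namely that $\int \tilde f\,d\mu_u$ is controlled by $\mathcal{E}(u,uf)-\frac{1}{2}\mathcal{E}(u^2,f)$, is essentially the bound (\ref{addlk}) that the paper actually proves. The problem is the route you take to it.

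The gap is your assertion that the symmetric part $\check{\mathcal{E}}(u,v)=\frac{1}{2}(\mathcal{E}(u,v)+\mathcal{E}(v,u))$ is a symmetric Dirichlet form admitting a Beurling--Deny decomposition with nonnegative diffusion, jumping and killing parts. For a semi-Dirichlet form only $(T_t)$ is sub-Markovian; the co-semigroup is merely positivity preserving, and this is precisely the regime the paper is written for. Consequently $\check{\mathcal{E}}$ need not satisfy the Markovian contraction property: for the introductory example $\mathcal{E}(u,v)=\int_0^1u'v'\,dx+\int_0^1\sqrt{x}\,u'v\,dx$ on $H^{1,2}_0(0,1)$ one computes $\check{\mathcal{E}}(u,v)=\int_0^1u'v'\,dx-\frac{1}{4}\int_0^1x^{-1/2}uv\,dx$, whose ``killing part'' is a negative measure, so $\check{\mathcal{E}}$ is not a Dirichlet form and has no Beurling--Deny representation of the kind you invoke. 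Your square-field identity therefore has no justification, the positivity you use to discard the diffusion and killing terms can fail, and the comparison $2J^{s}\ge J$ between a hypothetical symmetric jumping measure and the L\'evy-system measure $J(dy,dx)=N(x,dy)\mu_H(dx)$ is not available without a sub-Markovian dual. The paper avoids all of this: after reducing to a regular form by quasi-homeomorphism it works with the approximating measures $\sigma_\beta$ of the \emph{non-symmetrized} form, uses only the sub-Markovianity of $\beta G_\beta$ to get $\frac{\beta}{2}\int\int(u(x)-u(y))^2u_n(y)\,\sigma_\beta(dx,dy)\le\beta(u-\beta G_\beta u,uu_n)-\frac{\beta}{2}(u^2-\beta G_\beta u^2,u_n)$, and passes to the limit along continuity sets $\Gamma_l$ via the semi-Dirichlet Beurling--Deny formula of \cite{HC06}; this is done first for $u\in D(\mathcal{E})\cap C_0(E)$ and then extended to $D(\mathcal{E})_b$ by Fatou, a two-stage approximation your argument also omits. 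To repair your proof you would need to replace the symmetrization step by this (or an equivalent) argument that never assumes the dual is sub-Markovian.
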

\begin{proof}
Since Condition \ref{assum3} is a special case of Assumption
\ref{assum1},  hence by Theorem \ref{thm3.2} we need only to check
that any $u\in D(\mathcal{E})_b$ satisfies Condition (S). By the
quasi-homeomorphism method (cf. \cite{CMR} or \cite[Theorem
3.8]{HC06}), without loss of generality below we assume that
$({\cal E},D({\cal E}))$ is a regular semi-Dirichlet form.
  Let $\{E_n\}$ be a sequence of relatively compact open sets such that $E=\cup_nE_n$
  and $\{u_n\}\subset D(\mathcal{E})\cap C_0(E)$ satisfying $u_n=1$ on $E_n$ for each $n\in\mathbb{N}$. We choose a sequence of relatively
compact open sets $G_l\uparrow E$ and a sequence of numbers
$\delta_l\downarrow 0$ such that the set $\Gamma_l:=\{(x,y)\in
G_l\times G_l\,|\,|\rho(x,y)\ge \delta_l\}$ is a continuous set
w.r.t. $J$ for every $l\in \mathbb{N}$, where $\rho$ is the metric
of $E$. For $\beta>0$, let $\sigma_{\beta}$ be the unique positive
Radon measures on $E\times E$ satisfying
$$
(\beta G_{\beta}f,g)=\int_{E\times
E}f(x)g(y)\sigma_{\beta}(dxdy),\ \ \forall f,g\in
D(\mathcal{E})\cap C_0(E).
$$

Let $u\in D(\mathcal{E})\cap C_0(E)$. Then, for each $n\in
\mathbb{N}$,
\begin{eqnarray}\label{addlk}
&&\int_{E_n}\int_{E}(u(x)-u(y))^{2}N(x,dy)\mu_{H}(dx)\nonumber\\
&&\ \ \ \leq\int_{E}\int_{E}u_n(x)(u(x)-u(y))^{2}N(x,dy)\mu_{H}(dx)\nonumber\\
&&\ \ \ \leq\lim_{l\rightarrow\infty}\int\int_{\Gamma_l}(u(x)-u(y))^2u_n(y)J(dx,dy)\nonumber\\
&&\ \ \ =\lim_{l\rightarrow\infty}\lim_{\beta\rightarrow\infty}\frac{\beta}{2}\int\int_{\Gamma_l}(u(x)- u(y))^2u_n(y)\sigma_{\beta}(dx,dy)\nonumber\\
&&\ \ \ \leq\lim_{\beta\rightarrow\infty}\frac{\beta}{2}\int_E\int_{E}(u(x)-u(y))^2u_n(y)\sigma_{\beta}(dx,dy)\nonumber\\
&&\ \ \ \leq\lim_{\beta\rightarrow\infty}\frac{\beta}{2}\{(\beta G_{\beta}1_E,u^2u_n)-2(\beta G_{\beta}u,uu_n)+(\beta G_{\beta}u^2,u_n)\}\nonumber\\
&&\ \ \ \leq\lim_{\beta\rightarrow\infty}\{\beta(u-\beta G_{\beta}u,uu_n)-\frac{\beta}{2}(u^2-\beta G_{\beta}u^2,u_n)\}\nonumber\\
&&\ \ \ ={\cal E}(u,uu_n)-\frac{1}{2}{\cal E}(u^2,u_n),
\end{eqnarray}
which implies that $u$ satisfies Condition (S).

For general $u\in D(\mathcal{E})_b$, we may select a sequence of
functions $\{u_k\}\subset D(\mathcal{E})\cap C_0(E)$ such that
$u_k\rightarrow u$ w.r.t. the $\tilde {\cal E}^{1/2}_1$-norm as
$k\rightarrow\infty$ and $\|u_k\|_{\infty}\le \|u\|_{\infty}$ for
$k\in \mathbb{N}$. Then by (\ref{lkj}), (\ref{addlk}) and Fatou's
lemma, we can show that $\int_{E_n}\int_{E}(\tilde u(x)-\tilde
u(y))^{2}N(x,dy)\mu_{H}(dx)<\infty.$ Hence $u$ satisfies Condition
(S), which completes the proof.
\end{proof}
\begin{rem}
Proposition \ref{propO} shows that Theorem \ref{thm3.2} is an
extension of \cite[Theorem 5.1.5]{oshima13}.
\end{rem}
We would like to point out that the methods of \cite{oshima13} in
developing Fukushima's decomposition are  different from ours. In
the next subsection we shall see that Theorem \ref{thm3.2} is
proved by the localization and pasting technique. The main
techniques employed by Oshima in developing his Theorem 5.1.5 are
the weak sense energy and the genius auxiliary bilinear form
invented in \cite{oshima13}. We take this opportunity to thank
Professor Oshima for sending us his manuscript \cite{oshima13}.
The condition (${\cal E}.5$) in \cite{oshima13} stimulated us to
formulate Assumption  \ref{assum1}.

\begin{rem}
Theorem \ref{thm3.2} extends the corresponding results of
  \cite[Theorem 5.5.1]{Fu94} and
\cite[Theorem 4.2]{kuwae2010} from the symmetric case to the
semi-Dirichlet form case. \end{rem} Note that for a symmetric
Dirichlet form $({\cal E},D({\cal E}))$, Assumption \ref{assum1}
is satisfied automatically. Also, $u\in D({\cal E})_{loc}$
satisfies Condition (S) trivially if $({\cal E},D({\cal E}))$ is
local. When $({\cal E},D({\cal E}))$ is non-local, Condition (S)
is necessary even in the symmetric case. In developing stochastic
analysis with Nakao's integral, Kuwae obtained  in
\cite{kuwae2010} a generalized Fukushima decomposition in the
symmetric case for a subclass of $D({\cal E})_{loc},$ which is
equivalent to impose Condition (S) for $u\in D({\cal E})_{loc}.$
In this paper when dealing with purely discontinuous part of
$M^{[u]},$  we adopted some idea from \cite{kuwae2010} without
making use of Nakao's integral. One of the authors of this paper
has joint work with others extending Nakao's integral to
non-symmetric Dirichlet forms (cf. \cite{Chen0}). We feel that
Nakao's integral can also be extended to semi-Dirichlet forms.
\begin{rem} In Theorem \ref{thm3.2} if  we use ${\mathcal{M}}^{[\![0,\zeta[\![}_{loc}$ instead of ${\mathcal{M}}^{I(\zeta)}_{loc}$,
then the uniqueness of the decomposition may fail to be true.
\end{rem}
We shall discuss the above remark and related topics in detail in
Section \ref{sec:uniqueness} below.

\subsection{Proof of the theorem}\label{2}
Before proving Theorem \ref{thm3.2}, we prepare some lemmas.

We fix a $\{{V_n}\}\in\Theta$ satisfying Assumption \ref{assum1}.
Without loss of generality, we assume that $\widetilde{\hat{h}}$
is bounded on each ${V_n}$, otherwise we may replace ${V_n}$ by
${V_n}\cap\{\widetilde{\hat{h}}< n \}$. To simplify notations, we
write
$$\bar{h}_{n}:=\bar{h}^{V_n}.
$$
\begin{lem}\label{thm3.6}(\cite[Lemma
2.6]{MMS}) Let $u\in D(\mathcal{E})_{V_n,b}$. Then there exist
unique $M^{n,[u]}\in\dot{\mathcal{M}}^{V_n}$ and $N^{n,[u]}\in
\mathcal{N}^{V_n}_{c}$ such that ${\rm for}\ {\cal
E}{\textrm{-}q.e.}\ x\in V_n$,
$$
\tilde{u}(X^{{V_n}}_{t})-\tilde{u}(X^{{V_n}}_{0})=M^{n,[u]}_{t}+N^{n,[u]}_{t},\
\ t\ge 0,\ \ P_{x}{\textrm{-}a.s.}
$$
\end{lem}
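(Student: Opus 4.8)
The plan is to follow the classical Fukushima decomposition scheme, adapted to the part form $(\mathcal{E}^{V_n},D(\mathcal{E})_{V_n})$ and to the asymmetric energy functional $e^{V_n}$, which is weighted by the $1$-co-excessive reference function $\bar h_n$. First I would treat the regularized functions $u_\beta:=\beta G^{V_n}_{\beta+1}u$. Each $u_\beta$ lies in the domain of the generator $L^{V_n}$ of $(\mathcal{E}^{V_n},D(\mathcal{E})_{V_n})$, so Dynkin's formula yields
\[
\tilde u_\beta(X^{V_n}_t)-\tilde u_\beta(X^{V_n}_0)=M^{n,[u_\beta]}_t+N^{n,[u_\beta]}_t,
\]
where $N^{n,[u_\beta]}_t:=\int_0^t L^{V_n}u_\beta(X^{V_n}_s)\,ds$ is absolutely continuous in $t$, hence of bounded variation and zero quadratic variation, and $M^{n,[u_\beta]}$ is the compensated martingale part. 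Sub-Markovianity of the resolvent gives $\|u_\beta\|_\infty\le\|u\|_\infty$, and $u_\beta\to u$ in the $\mathcal{E}_1^{1/2}$-norm. The additivity, the identities $E_x(M_t)=0$ and $E_x(M_t^2)<\infty$, and the property $N^{n,[u_\beta]}\in\mathcal{N}^{V_n}_c$ are immediate; the substance lies in the finiteness and convergence of the energy.

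The key step is the energy identity. Using the martingale property, $E_{\bar h_n\cdot m}((M^{n,[u_\beta]}_t)^2)=E_{\bar h_n\cdot m}(\langle M^{n,[u_\beta]}\rangle_t)$, so by Lemma \ref{lem:Revuz} applied with the $1$-co-excessive function $g=\bar h_n\in D(\mathcal{E})_{V_n}$,
\[
e^{V_n}(M^{n,[u_\beta]})=\frac{1}{2}\,\langle\mu_{\langle M^{n,[u_\beta]}\rangle},\widetilde{\bar h_n}\rangle .
\]
I would then identify the Revuz measure of the quadratic variation through the standard formula $\langle\mu_{\langle M^{n,[u]}\rangle},f\rangle=2\mathcal{E}(uf,u)-\mathcal{E}(u^2,f)$, evaluated at $f=\widetilde{\bar h_n}$. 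This requires the products $u\bar h_n$ and $u^2$ to lie in $D(\mathcal{E})$ with form norm controlled by $\mathcal{E}_1(u,u)$, which is exactly what Assumption \ref{assum1} supplies (as in \cite{MMS}), since $\bar h_n$ is bounded on $V_n$ and $u\in D(\mathcal{E})_{V_n,b}$. By linearity of the generator-level decomposition, $M^{n,[u_\beta]}-M^{n,[u_{\beta'}]}=M^{n,[u_\beta-u_{\beta'}]}$, so the estimate yields $e^{V_n}(M^{n,[u_\beta]}-M^{n,[u_{\beta'}]})\le C\,\mathcal{E}_1(u_\beta-u_{\beta'},u_\beta-u_{\beta'})$; as $\{u_\beta\}$ is $\mathcal{E}_1^{1/2}$-Cauchy and uniformly bounded, $\{M^{n,[u_\beta]}\}$ is Cauchy in the energy norm on $\dot{\mathcal{M}}^{V_n}$.

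By completeness of $\dot{\mathcal{M}}^{V_n}$ under the energy-plus-$L^2$ norm (the same argument as in the symmetric theory, cf. \cite{Fu94,MR92}), the limit $M^{n,[u]}:=\lim_\beta M^{n,[u_\beta]}$ exists and belongs to $\dot{\mathcal{M}}^{V_n}$. Setting $N^{n,[u]}_t:=\tilde u(X^{V_n}_t)-\tilde u(X^{V_n}_0)-M^{n,[u]}_t$, I would verify that $N^{n,[u]}$ is the limit of the $N^{n,[u_\beta]}$, hence a CAF with $E_x(|N_t|)<\infty$ and $e^{V_n}(N^{n,[u]})=0$, so $N^{n,[u]}\in\mathcal{N}^{V_n}_c$. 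Uniqueness is then standard: if $M_t+N_t=M'_t+N'_t$ with $M,M'\in\dot{\mathcal{M}}^{V_n}$ and $N,N'\in\mathcal{N}^{V_n}_c$, then $M-M'=N'-N$ is simultaneously a finite-energy MAF and a continuous zero-energy CAF; a continuous martingale additive functional of zero energy has vanishing quadratic variation, hence is identically $0$, forcing $M=M'$ and $N=N'$.

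The main obstacle is the energy estimate of the second paragraph. In the semi-Dirichlet setting there is no dual Markov process and no genuine symmetric energy form, so the identification of $\mu_{\langle M^{n,[u_\beta]}\rangle}$ and its control by $\mathcal{E}_1(u_\beta,u_\beta)$ must be carried out through the co-excessive weight $\bar h_n$ and the weak duality rather than by a symmetric carré-du-champ computation. Guaranteeing that the relevant products stay in $D(\mathcal{E})$ with a uniform form bound, and that this bound survives the passage $\beta\to\infty$ so as to give convergence of both the martingale parts and the zero-energy parts $N^{n,[u_\beta]}$, is the delicate point, and is precisely the role played by Assumption \ref{assum1}.
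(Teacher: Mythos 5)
Your argument is essentially the one the paper relies on: the paper does not reprove this lemma but imports it from \cite[Lemma 2.6]{MMS}, noting only that the proof there --- resolvent approximation $u_\beta=\beta G^{V_n}_{\beta+1}u$ via Dynkin's formula, the energy-measure identity paired against the co-excessive weight $\bar h_n$ through the Revuz correspondence of Lemma \ref{lem:Revuz}, the product estimate supplied by Assumption \ref{assum1}, and completeness of $\dot{\mathcal{M}}^{V_n}$ in the energy norm --- extends with no change from the local to the general case, which is exactly the scheme you describe. The only point to correct is the order of arguments in your energy-measure formula: in the non-symmetric setting it must read $\int\tilde f\,d\mu^{(n)}_{\langle u\rangle}=2\mathcal{E}(u,uf)-\mathcal{E}(u^2,f)$ (as in Lemma \ref{lem4.1} with $u=v$), not $2\mathcal{E}(uf,u)-\mathcal{E}(u^2,f)$.
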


Lemma \ref{thm3.6} has been given in \cite{MMS} under Assumption
\ref{assum2} and the additional assumption that $({\cal E},D({\cal
E}))$ is local; however, it can be easily extended to general
semi-Dirichlet forms under Assumption \ref{assum1} with the
similar proof.

We now fix a $u\in {D(\mathcal{E})}_{loc}$ satisfying Condition
(S). Then there exist $\{V^1_n\}\in \Theta$ and $\{u_n\}\subset
D(\mathcal{E})$ such that $u=u_n$ $m{\textrm{-}a.e.}$ on $V^1_n$.
By \cite[Proposition 3.6]{MR95}, we may assume without loss of
generality that  each $u_n$ is ${\cal E}$-quasi-continuous. By
\cite[Proposition 2.16]{MR95}, there exists an $\mathcal{E}$-nest
$\{F_{n}^2\}$ of compact subsets of $E$ such that $\{u_n\}\subset
C\{F_{n}^2\}$. Denote by $V^{2}_{n}$ the finely interior of
$F^2_n$. Then $\{V^{2}_{n}\}\in\Theta$. Since $u$ satisfies
Condition (S), there exists an $\mathcal{E}$-nest $\{F_{n}^3\}$ of
compact subsets of $E$ such that $\mu_u(F_{n}^3)<\infty$. Denote
by $V^{3}_{n}$ the finely interior of $F^3_n$. Since the killing
measure $K(dx)=N(x,\Delta)\mu_H(dx)$ is a smooth measure, there
exists an $\mathcal{E}$-nest $\{F_{n}^4\}$ of compact subsets of
$E$ such that $K(F_{n}^4)<\infty$. Denote by $V^{4}_{n}$ the
finely interior of $F^4_n$. Define ${V'_n}=V^1_n\cap V^2_{n}\cap
V^3_{n}\cap V^4_{n}$. Then $\{{V'_n}\}\in\Theta$, each $u_n$ is
bounded on ${V'_n}$, and
\begin{eqnarray*}
&&\int_{V'_n}\int_{E_{\Delta}}(\tilde u(x)-\tilde u(y))^2N(x,dy)\mu_H(dx)\\
&&\ \ \ \ \ =\int_{V'_n}\int_E(\tilde u(x)-\tilde u(y))^2J(dy,dx)+\int_{V'_n}\tilde u^2(x)K(dx)\\
&&\ \ \ \ \ <\infty.
\end{eqnarray*}
To simplify notation, we still use $V_n$ to denote $V_n\cap V'_n$.

For $n\in\mathbb{N}$, we define $E_{n}=\{x\in
E\,|\,{\widetilde{h_n}}(x)>{1\over n}\}$, where
$h_n:=G_{1}^{{V_n}}\phi$. Then $\{E_{n}\}\in\Theta$ satisfying
$\overline{E}_n^{\mathcal{E}}\subset E_{n+1}\ {\cal
E}{\textrm{-}q.e.}$ and $E_{n}\subset {V_n}\ {\cal
E}{\textrm{-}q.e.}$ for each $n\in \mathbb{N}$ (cf. \cite[Lemma
3.8]{kuwae}). Here $\overline{E}_n^{\mathcal{E}}$ denotes the
${\cal E}$-quasi-closure of $E_n$. Define
$f_{n}=n\widetilde{h_n}\wedge1$. Then $f_{n}=1$ on $E_{n}$ and
$f_{n}=0$ on $V^c_{n}$. Since $f_{n}$ is a 1-excessive function of
$(\mathcal{E}^{V_n},D(\mathcal{E})_{V_n})$ and $f_{n}\leq
n\widetilde{h_n}\in D(\mathcal{E})_{V_n}$, hence $f_{n}\in
D(\mathcal{E})_{V_n}$ by \cite[Remark 3.4(ii)]{MR1995}. Denote by
$Q_n$ the bound of $|u_n|$ on $V_n$. Then  $u_{n}f_{n}=((-Q_n)\vee
u_n\wedge Q_n)f_n\in D(\mathcal{E})_{V_n,b}$. For $n\in
\mathbb{N}$, we denote by $\{\mathcal{F}^n_{t}\}$ the minimum
completed admissible filtration of $X^{{V_n}}$. For $n<l$,
$\mathcal{F}^n_{t}\subset \mathcal{F}^l_{t}\subset
\mathcal{F}_{t}$. Since $E_n\subset V_n$, $\tau_{E_n}$ is an
$\{\mathcal{F}^{n}_{t}\}$-stopping time.

\begin{lem}\label{KKK}(\cite[Lemma
25.3]{K}) For any optional time $T$ and predictable process $Y$,
the random variable $Y_T1_{(T<\infty)}\in\mathcal{F}_{T-}$.
\end{lem}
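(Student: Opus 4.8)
The plan is to prove the statement by a monotone class (Dynkin $\pi$-$\lambda$) argument on the predictable $\sigma$-field $\mathcal{P}$ of $\Omega\times[0,\infty)$. Recall that $\mathcal{P}$ is generated by the $\pi$-system $\mathcal{C}$ of predictable rectangles $\{0\}\times A$ with $A\in\mathcal{F}_0$ and $(s,t]\times A$ with $0\le s<t$ and $A\in\mathcal{F}_s$, and that $\mathcal{F}_{T-}$ is generated by $\mathcal{F}_0$ together with the sets $A\cap\{s<T\}$, $A\in\mathcal{F}_s$, $s\ge0$. First I would record the elementary facts about $\mathcal{F}_{T-}$ that drive the whole argument: taking $A=\Omega$ shows $\{s<T\}\in\mathcal{F}_{T-}$ for every $s$, whence $\{T\le t\}=\{t<T\}^c$ and $\{T<\infty\}=\bigcup_n\{T\le n\}$ also lie in $\mathcal{F}_{T-}$; moreover $\mathcal{F}_0\subset\mathcal{F}_{T-}$, and $A\cap\{s<T\}\in\mathcal{F}_{T-}$ whenever $A\in\mathcal{F}_s$.

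Next I would verify the conclusion for the indicator $Y=1_C$ of each generator $C\in\mathcal{C}$. For $C=\{0\}\times A$ with $A\in\mathcal{F}_0$ one has $Y_T1_{(T<\infty)}=1_A\,1_{(T=0)}$, and both factors belong to $\mathcal{F}_0\subset\mathcal{F}_{T-}$. For $C=(s,t]\times A$ with $A\in\mathcal{F}_s$ one computes $Y_T1_{(T<\infty)}=1_A\,1_{(s<T\le t)}$; writing $A\cap\{s<T\le t\}=(A\cap\{s<T\})\cap\{t<T\}^c$ exhibits this set as an intersection of two members of $\mathcal{F}_{T-}$ by the facts recorded above, so the random variable is $\mathcal{F}_{T-}$-measurable. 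This rectangle case is the one nontrivial point, and I expect it to be the main obstacle: everything hinges on the identity $A\cap\{s<T\le t\}=(A\cap\{s<T\})\setminus\{t<T\}$ together with $\{t<T\}\in\mathcal{F}_{T-}$; once this is in hand the remainder is bookkeeping.

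Finally I would let $\mathcal{D}$ be the collection of all $C\in\mathcal{P}$ for which $(1_C)_T1_{(T<\infty)}$ is $\mathcal{F}_{T-}$-measurable, and check that $\mathcal{D}$ is a $\lambda$-system: it contains $\Omega\times[0,\infty)$ since $\{T<\infty\}\in\mathcal{F}_{T-}$; it is stable under complementation because $(1_{C^c})_T1_{(T<\infty)}=1_{(T<\infty)}-(1_C)_T1_{(T<\infty)}$; and it is stable under increasing limits because $(1_{C_n})_T\uparrow(1_C)_T$ pointwise when $C_n\uparrow C$. Since $\mathcal{D}\supset\mathcal{C}$ by the previous step and $\mathcal{C}$ is a $\pi$-system generating $\mathcal{P}$, Dynkin's theorem yields $\mathcal{D}=\mathcal{P}$, so the assertion holds for the indicator of every predictable set. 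A routine approximation then upgrades this to an arbitrary predictable process: simple predictable processes are finite linear combinations of such indicators, a bounded predictable $Y$ is a pointwise limit of simple predictable processes, and a general predictable $Y$ is handled by truncation and passage to the limit, preserving $\mathcal{F}_{T-}$-measurability of $Y_T1_{(T<\infty)}$ at each stage.
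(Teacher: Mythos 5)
Your proof is correct. The paper gives no argument for this lemma at all — it is quoted verbatim from Kallenberg \cite[Lemma 25.3]{K} — and your monotone class argument over the predictable rectangles, reducing everything to the membership of $A\cap\{s<T\le t\}=(A\cap\{s<T\})\cap\{t<T\}^{c}$ in $\mathcal{F}_{T-}$, is exactly the standard proof of that cited result (note in passing that your class $\mathcal{D}$ is in fact a $\sigma$-field, being the preimage class under $\omega\mapsto(\omega,T(\omega))$ on $\{T<\infty\}$, so the Dynkin step can be bypassed).
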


Hereafter for a martingale $M$, we denote by $M^c$ and $M^d$ its
continuous part and purely discontinuous part, respectively.

\begin{lem}\label{lem3.7} For $n<l$, we have
 $M^{n,[u_nf_n],c}_{t\wedge\tau_{E_n}}=M^{l,[u_lf_l],c}_{t\wedge\tau_{E_n}}$
, $t\ge0$, $P_{x}{\textrm{-}a.s.}\ \ {\rm for}\ {\cal
E}{\textrm{-}q.e.}\ x\in  V_n$.
\end{lem}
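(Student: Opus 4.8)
The plan is to compare the two Fukushima type decompositions furnished by Lemma \ref{thm3.6} on the stochastic interval $[\![0,\tau_{E_n}[\![$, where the two part processes and the two integrands coincide, and then to isolate the continuous martingale parts. First I would record the coincidences that hold up to $\tau_{E_n}$. Since $n<l$ we have $E_n\subset V_n\subset V_l$ $\mathcal{E}$-q.e., so for $t<\tau_{E_n}$ the trajectory stays in $E_n$ and hence $X^{V_n}_t=X^{V_l}_t=X_t$. On $E_n$ the two integrands agree q.e.: indeed $f_n=1$ on $E_n$ and $f_l=1$ on $E_l\supset E_n$, while $u_n=u=u_l$ $m$-a.e. on $V_n\supset E_n$, so passing to quasi-continuous versions (two q.c. functions equal $m$-a.e. on a quasi-open set agree q.e. there) gives $\widetilde{u_nf_n}=\widetilde{u_n}=\widetilde{u_l}=\widetilde{u_lf_l}$ q.e. on $E_n$. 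As the process avoids $\mathcal{E}$-exceptional sets for q.e. starting point, it follows that $\widetilde{u_nf_n}(X_t)=\widetilde{u_lf_l}(X_t)$ for $t<\tau_{E_n}$, $P_x$-a.s.

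Writing $M^n:=M^{n,[u_nf_n]}$, $N^n:=N^{n,[u_nf_n]}$ and similarly for $l$, I would subtract the two decompositions on $[\![0,\tau_{E_n}[\![$; the left-hand sides cancel and I obtain
$$M^n_t-M^l_t=N^l_t-N^n_t,\qquad t<\tau_{E_n}.$$
Now $\tau_{E_n}$ is an $\{\mathcal{F}^n_t\}$-stopping time, hence an $\{\mathcal{F}^l_t\}$-stopping time since $\mathcal{F}^n_t\subset\mathcal{F}^l_t$. Setting $\tilde M:=M^n_{\cdot\wedge\tau_{E_n}}-M^l_{\cdot\wedge\tau_{E_n}}$, this is a martingale whose continuous part is $\tilde M^c=M^{n,c}_{\cdot\wedge\tau_{E_n}}-M^{l,c}_{\cdot\wedge\tau_{E_n}}$, since stopping commutes with taking the continuous part. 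On $[\![0,\tau_{E_n}[\![$ the process $\tilde M$ equals the continuous functional $N^l-N^n$, so $\tilde M$ has no jumps before $\tau_{E_n}$; its purely discontinuous part vanishes there and $\tilde M^c=N^l-N^n$ on $[\![0,\tau_{E_n}[\![$.

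To finish, I would argue that $\tilde M^c\equiv0$ up to $\tau_{E_n}$. Since $N^n\in\mathcal{N}^{V_n}_c$ and $N^l\in\mathcal{N}^{V_l}_c$ are CAFs of zero energy, their difference has zero quadratic variation on $[\![0,\tau_{E_n}[\![$; because $\tilde M^c$ is a continuous martingale agreeing with this difference there, its quadratic variation vanishes, and a continuous martingale of zero quadratic variation is identically zero. By continuity this extends to $t=\tau_{E_n}$, yielding $M^{n,c}_{t\wedge\tau_{E_n}}=M^{l,c}_{t\wedge\tau_{E_n}}$ for all $t$, which is the assertion. The main obstacle is the rigorous reconciliation across the two different part processes: I must ensure the stopped functionals $M^n_{\cdot\wedge\tau_{E_n}}$ and $M^l_{\cdot\wedge\tau_{E_n}}$ are martingales in a common filtration so that the continuous/purely-discontinuous splitting of their difference is meaningful (here the viewpoint that $X^{V_n}$ is the part of $X^{V_l}$ on $V_n$, so both are $\{\mathcal{F}^l_t\}$-martingales up to $\tau_{E_n}<\tau_{V_n}$, is the natural device), and that the passage from ``zero energy'' of the $N$'s to ``zero quadratic variation'' of their difference is justified within the semi-Dirichlet setting. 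The jump of the stopped martingales at the terminal time $\tau_{E_n}$ is precisely what forces me to work on the open interval and to read off only the continuous parts.
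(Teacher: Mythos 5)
Your one-step comparison of $M^{n,[u_nf_n]}$ and $M^{l,[u_lf_l]}$ breaks down at the claim that, because $\tilde M:=M^{n,[u_nf_n]}_{\cdot\wedge\tau_{E_n}}-M^{l,[u_lf_l]}_{\cdot\wedge\tau_{E_n}}$ has no jumps on $[\![0,\tau_{E_n}[\![$, its purely discontinuous part vanishes there. A purely discontinuous martingale is not zero off its jump times: it carries the compensator of its future jumps. Here $\tilde M$ genuinely does jump at $\tau_{E_n}$, since $u_nf_n$ and $u_lf_l$ differ off $E_n$ and $X^{V_n}_{\tau_{E_n}}\neq X^{V_l}_{\tau_{E_n}}$ when $X_{\tau_{E_n}}\notin V_n$; consequently $\tilde M^d_t$ for $t<\tau_{E_n}$ equals minus the (generally nonvanishing, continuous) compensator of that terminal jump, and your identification $\tilde M^c=N^l-N^n$ on $[\![0,\tau_{E_n}[\![$ fails. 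This is exactly the phenomenon behind the counterexample in Section \ref{sec:uniqueness} of the paper (the dual predictable projection $\tilde A$ restricted to $[\![0,T[\![$ is a nonzero continuous process that is the restriction of a purely discontinuous martingale) --- the very pitfall that motivates working on $I(\zeta)$. The step is repairable: since $[\tilde M^c]\le[\tilde M]$ and, for $t<\tau_{E_n}$, $[\tilde M]_t$ is the quadratic variation of $N^l-N^n$, which is zero by the energy estimate for elements of $\mathcal{N}^{V_j}_c$ given in the paper, one still gets $\tilde M^c\equiv 0$ up to $\tau_{E_n}$; but as written the reasoning is wrong.

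The paper sidesteps the jump problem by a two-step argument that you omit: first it compares the two decompositions of the \emph{same} function $u_nf_n$ along $X^{V_n}$ and $X^{V_l}$ (there the terminal jumps cancel exactly, because $\widetilde{u_nf_n}$ vanishes off $V_n$ and at $\Delta$, so the difference of the stopped martingales is honestly continuous and the zero-quadratic-variation argument applies directly); it then passes from $M^{l,[u_nf_n],c}$ to $M^{l,[u_lf_l],c}$ by a locality lemma for continuous martingale parts (the analog of \cite[Lemma 2.4]{kuwae2010}), using $u_nf_n=u_lf_l$ on $E_n$. You would need this second ingredient, or the $[\cdot]$-repair above, in any case. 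Finally, the filtration reconciliation that you flag as the ``main obstacle'' is not incidental: the paper resolves it with the $\sigma$-fields $\Upsilon^n_t$, $\Upsilon^{n,l}_t$ generated by the stopped paths, the identity $\Upsilon^n_{t-}=\Upsilon^{n,l}_{t-}$, and Lemma \ref{KKK}, so that both stopped martingales and their continuous parts are martingales over a common filtration \emph{before} the continuous/purely discontinuous splitting of the difference is taken; simply asserting that both are $\{\mathcal{F}^l_t\}$-martingales leaves the hardest part of the lemma unproved.
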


\begin{proof} Let $n<l$. Since
$M^{n,[u_nf_n]}\in\dot{\mathcal{M}}^{V_n}$, $M^{n,[u_nf_n]}$ is an
$\{\mathcal{F}^{n}_{t}\}$-martingale by the Markov property. Since
$\tau_{E_n}$ is an $\{\mathcal{F}^{n}_{t}\}$-stopping time,
$\{M^{n,[u_nf_n]}_{_{t\wedge\tau_{E_{n}}}}\}$ is an
$\{\mathcal{F}^{n}_{t\wedge\tau_{E_{n}}}\}$-martingale. Denote
$\Upsilon^n_t=\sigma\{X^{{V_n}}_{s\wedge\tau_{E_n}}\,|\,0\leq
s\leq t\}$. Then $\{M^{n,[u_nf_n],c}_{_{t\wedge\tau_{E_{n}}}}\}$
is a $\{\Upsilon^n_{t}\}$-martingale. Denote
$\Upsilon^{n,l}_t=\sigma\{X^{{V_l}}_{s\wedge\tau_{E_n}}\,|\,0\leq
s\leq t\}$. Similarly, we can show that
$\{M^{l,[u_nf_n],c}_{_{t\wedge\tau_{E_{n}}}}\}$ is a
$\{\Upsilon^{n,l}_{t}\}$-martingale. Since
\begin{equation}\label{eqadd}
X^{V_l}_{s}
 =X_{s} =X_{s}^{{V_n}},\ \ s<\tau_{E_n},\ P_{x}{\textrm{-}a.s.}\ \ {\rm for}\ {\cal
E}{\textrm{-}q.e.}\ x\in V_n,
\end{equation}
we find that $\Upsilon^n_{t-}=\Upsilon^{n,l}_{t-}$. Hence
$\{M^{l,[u_nf_n],c}_{_{t\wedge\tau_{E_{n}}}}\}\in\Upsilon^{n,l}_{t-}$
by Lemma \ref{KKK} and therefore
$\{M^{l,[u_nf_n],c}_{_{t\wedge\tau_{E_{n}}}}\}$ is a
$\{\Upsilon^n_t\}$-martingale. Moreover,
$N^{l,[u_nf_n]}_{t\wedge\tau_{E_{n}}}\in\Upsilon^{n,l}_{t-}=\Upsilon^{n}_{t-}\subset\mathcal{F}^n_{t\wedge\tau_{E_{n}}}$.

Let $N\in \mathcal{N}^{V_j}_c$ for some $j\in \mathbb{N}$. Then,
for any $T>0$,
\begin{eqnarray*}
\sum_{k=1}^{[rT]}E_{\bar{h}_j\cdot
m}[(N_{\frac{k+1}{r}}-N_{\frac{k}{r}})^{2}]
&\leq&\sum_{k=1}^{[rT]}e^{T}(E_{\cdot}(N_{\frac{1}{r}}^{2}),e^{-\frac{k}{r}}\hat{T}^{V_j}_{\frac{k}{r}}\bar{h}_j)\\
&\leq&\sum_{k=1}^{[rT]}e^{T}(E_{\cdot}(N_{\frac{1}{r}}^{2}),\bar{h}_j)\\
&\leq&rTe^{T}E_{\bar{h}_j\cdot
m}(N_{\frac{1}{r}}^{2})\rightarrow0\ \ \ \mbox{as} \ \
r\rightarrow\infty.
\end{eqnarray*}
Hence
$$\sum_{k=1}^{[rT]}(N_{\frac{k+1}{r}}-N_{\frac{k}{r}})^{2}\rightarrow0\ \ {\rm in}\ \ P_{m}\ \ {\rm as}\ r\rightarrow\infty,$$ which implies that
the quadratic variation process of $N$ w.r.t. $P_m$ is 0.
Therefore, the quadratic variation processes of
$\{N^{l,[u_nf_n]}_{t\wedge\tau_{E_{n}}}\}$ and
$\{N^{n,[u_nf_n]}_{t\wedge\tau_{E_{n}}}\}$ w.r.t. $P_m$ are 0.

By \cite[Proposition 3.3]{kuwae},
 $(\widehat{\hat{G}_{1}\phi})^{1}_{V^{c}_{n}}=\hat{G}_{1}\phi-\hat{G}_{1}^{{V_n}}\phi$.
Since $V^{c}_{n}\supset V^{c}_l$,
$(\widehat{\hat{G}_{1}\phi})^{1}_{V^{c}_{n}}\geq
(\widehat{\hat{G}_{1}\phi})^{1}_{V^{c}_l}$. Then
$\hat{G}_{1}^{{V_n}}\phi\leq \hat{G}_{1}^{V_l}\phi$ and thus
\begin{equation}\label{new2}\bar{h}_{n}\le\bar{h}_{l}.
\end{equation}
Therefore
\begin{equation}\label{new1} e^{V_n}(A)\le e^{V_l}(A)
\end{equation}
for any AF $A=(A_t)_{t\ge 0}$ of $X^{{V_n}}$.

By (\ref{eqadd}), we find that ${\rm for}\ {\cal
E}{\textrm{-}q.e.}\ x\in V_n$,
\begin{eqnarray*}
&
&M^{n,[u_nf_n],c}_{t\wedge\tau_{E_{n}}}+M^{n,[u_nf_n],d}_{t\wedge\tau_{E_{n}}}+N^{n,[u_nf_n]}_{t\wedge\tau_{E_{n}}}\\
&&\ \ \ \ =\widetilde{u_nf_n}(X^{{V_n}}_{t\wedge\tau_{E_n}})-\widetilde{u_nf_n}(X^{{V_n}}_0)\\
&&\ \ \ \ =\widetilde{u_nf_n}(X^{V_l}_{t\wedge\tau_{E_n}})-\widetilde{u_nf_n}(X^{V_l}_0)\\
&&\ \ \ \
=M^{l,[u_nf_{n}],c}_{t\wedge\tau_{E_{n}}}+M^{l,[u_nf_n],d}_{t\wedge\tau_{E_{n}}}+N^{l,[u_nf_{n}]}_{t\wedge\tau_{E_{n}}},
\ \ P_x-a.s.
\end{eqnarray*}
Then $\{M^{n,[u_nf_n],d}_{t\wedge\tau_{E_n}}\}\in\Upsilon^n_t$,
and $\{M^{n,[u_nf_n]}_{t\wedge\tau_{E_{n}}}\}$ and
$\{M^{l,[u_nf_n]}_{t\wedge\tau_{E_{n}}}\}$ are
$\{\Upsilon^n_t\}$-martingales. Hence
$M^{n,[u_nf_n],c}_{t\wedge\tau_{E_{n}}}=M^{l,[u_nf_n],c}_{t\wedge\tau_{E_{n}}}$
 and $N^{n,[u_nf_n]}_{t\wedge\tau_{E_{n}}}=N^{l,[u_nf_n]}_{t\wedge\tau_{E_{n}}}$, $P_{x}{\textrm{-}a.s.}\ \ {\rm for}\ m{\textrm{-}a.e.}\ x\in V_n$.
  This implies that
$E_m(<M^{n,[u_nf_n],c}_{\cdot\wedge\tau_{E_{n}}}-M^{l,[u_nf_n],c}_{\cdot\wedge\tau_{E_{n}}}>_
t)=0$, $\forall t\ge 0$. By Theorem \cite[Theorem 5.8(i)]{MMS}, we
find that
$M^{n,[u_nf_n],c}_{t\wedge\tau_{E_{n}}}=M^{l,[u_nf_n],c}_{t\wedge\tau_{E_{n}}}$,
$\forall t\ge 0$,
 $P_{x}{\textrm{-}a.s.}\ \ {\rm for}\ {\cal
E}{\textrm{-}q.e.}\ x\in V_n$.

Since $u_nf_n=u_lf_{l}=u$ on $E_n$, similar to \cite[Lemma
2.4]{kuwae2010}, we can show that
$M^{l,[u_nf_{n}],c}_{t}=M^{l,[u_lf_l],c}_{t}$ when
$t<\tau_{E_{n}}$, $P_{x}{\textrm{-}a.s.}\ \ {\rm for}\ {\cal
E}{\textrm{-}q.e.}\ x\in V_l$. Then
$M^{l,[u_nf_n],c}_{t\wedge\tau_{E_n}}=M^{l,[u_lf_l],c}_{t\wedge\tau_{E_n}}$,
$t\ge0$, $P_{x}{\textrm{-}a.s.}\ \ {\rm for}\ {\cal
E}{\textrm{-}q.e.}\ x\in  V_l$. Therefore
$M^{n,[u_nf_n],c}_{t\wedge\tau_{E_n}}=M^{l,[u_lf_l],c}_{t\wedge\tau_{E_n}}$
, $t\ge0$, $P_{x}{\textrm{-}a.s.}\ \ {\rm for}\ {\cal
E}{\textrm{-}q.e.}\ x\in  V_n$.
\end{proof}

\vskip 0.2cm

\noindent\textbf{Proof of Theorem \ref{thm3.2}}\ \ (a) Suppose
that $u$ satisfies Condition (S). We shall show that $u$ admits
the Fukushima type decomposition (\ref{new3}).

We define
$M^{[u],c}_{t\wedge\tau_{E_n}}:=\lim_{l\rightarrow\infty}M^{l,[uf_l],c}_{t\wedge\tau_{E_n}}$
and $M^{[u],c}_t:=0$ for $t>\zeta$  if there exists some $n$ such
that $\tau_{E_n}=\zeta$ and $\zeta<\infty$; or $M^{[u],c}_t:=0$
for $t\ge\zeta$, otherwise. By Lemma \ref{lem3.7}, $M^{[u],c}$ is
well defined. Define
$M^n_t:=M^{n+1,[uf_{n+1}],c}_{t\wedge\tau_{E_n}}$ for $t\ge 0$ and
$n\in\mathbb{N}$. Then
$M^{[u],c}_{t\wedge\tau_{E_n}}=M^{n}_{t\wedge\tau_{E_n}}$
$P_{x}{\textrm{-}a.s.}\ \ {\rm for}\ {\cal E}{\textrm{-}q.e.}\
x\in  V_{n+1}$ by Lemma \ref{lem3.7}. Since
$\overline{E}_n^{\mathcal{E}}\subset E_{n+1}\subset V_{n+1}\ {\cal
E}{\textrm{-}q.e.}$ implies that $P_x(\tau_{E_n}=0)=1$ for
$x\notin V_{n+1}$,
$M^{[u],c}_{t\wedge\tau_{E_n}}=M^{n}_{t\wedge\tau_{E_n}}$
$P_{x}{\textrm{-}a.s.}\ \ {\rm for}\ {\cal E}{\textrm{-}q.e.}\
x\in  E$. Similar to (\ref{new2}) and (\ref{new1}), we can show
that  $e^{V_n}(M^{n})\le e^{V_{n+1}}(M^{n})$ for each
$n\in\mathbb{N}$. Then $M^{n}\in \dot{\mathcal{M}}^{V_n}$ and
hence $M^{[u],c}\in \dot{\mathcal{M}}_{loc}$.

Next we show that $M^{n}$ is also an
$\{\mathcal{F}_{t}\}$-martingale. In fact, by the fact that
$\tau_{E_n}$ is an $\{\mathcal{F}^{n+1}_{t}\}$-stopping time, we
find that $I_{\tau_{E_n}\le s}$ is ${\cal
F}^{n+1}_{s\wedge\tau_{E_n}}$-measurable for any $s\ge 0$. Let
$0\le s_1<\cdots<s_k\le s<t$ and $g\in{\cal B}_b(\mathbb{R}^k)$.
Then, we obtain by the fact $M^{n+1,[uf_{n+1}],c}\in
\dot{\mathcal{M}}^{V_{n+1}}$ that ${\rm for}\ {\cal
E}{\textrm{-}q.e.}\ x\in  V_{n+1}$,
\begin{eqnarray*}
& &\int_{\Omega} M^n_tg(X_{s_1},\dots,X_{s_k})dP_x\\&&\ \ \ \
=\int_{\tau_{E_n}\le s}M^n_tg(X_{s_1},\dots,X_{s_k})dP_x
+\int_{\tau_{E_n}>s}M^n_tg(X_{s_1},\dots,X_{s_k})dP_x\\
&&\ \ \ \ =\int_{\tau_{E_n}\le s}M^n_sg(X_{s_1},\dots,X_{s_k})dP_x\\
&&\ \ \ \ \ \ \ \ +\int_{\Omega}
M^{n+1,[uf_{n+1}],c}_{t\wedge\tau_{E_n}}g(X^{V_{n+1}}_{s_1\wedge\tau_{E_n}},
\dots,X^{V_{n+1}}_{s_k\wedge\tau_{E_n}})
I_{\tau_{E_n}> s}dP_x\\
&&\ \ \ \ =\int_{\tau_{E_n}\le s}M^n_sg(X_{s_1},\dots,X_{s_k})dP_x\\
&&\ \ \ \ \ \ \ \ +\int_{\Omega}
M^{n+1,[uf_{n+1}],c}_{s\wedge\tau_{E_n}}g(X^{V_{n+1}}_{s_1\wedge\tau_{E_n}},
\dots,X^{V_{n+1}}_{s_k\wedge\tau_{E_n}})
I_{\tau_{E_n}> s}dP_x\\
&&\ \ \ \ =\int_{\tau_{E_n}\le s}M^n_sg(X_{s_1},\dots,X_{s_k})dP_x
+\int_{\tau_{E_n}>s}M^n_sg(X_{s_1},\dots,X_{s_k})dP_x\\
&&\ \ \ \ =\int_{\Omega} M^n_sg(X_{s_1},\dots,X_{s_k})dP_x.
\end{eqnarray*}
Obviously, the equality holds for $x\notin  V_{n+1}$. Hence
$M^{n}$ is an $\{\mathcal{F}_{t}\}$-martingale. By Proposition
\ref{prop00}  below, $\cup_n [\![0,\tau_{E_n}]\!]\supseteq
I(\zeta)$ $ P_{x}{\textrm{-}a.s.}\ \ {\rm for}\ {\cal
E}{\textrm{-}q.e.}\ x\in E$. Therefore
$M^{[u],c}\in{\mathcal{M}}^{I(\zeta)}_{loc}$.

We define $\phi(x,y)=\tilde u(y)-\tilde u(x)$,
$\phi_{l}(x,y)=(\tilde u(y)-\tilde u(x))1_{\{|\tilde u(x)-\tilde
u(y)|>\frac{1}{l}\}}$, and
\begin{eqnarray*}
M^{l}_{t}&:=&\sum_{0<s\leq
t}\phi_{l}(X_{s-},X_{s})-\int^{t}_{0}\int_{E_{\Delta}}\phi_{l}(X_{s},y)N(X_{s},dy)dH_{s}
 \end{eqnarray*}
 for $l\in\mathbb{N}$. Denote $T^{l}_{m}:=\inf\{t>0\,|\,|M^{l}_{t}|\geq m\}$ for $m\in\mathbb{N}$.
 Then, $\{T^{l}_{m}\}$ is an $\{{\cal F}_t\}$-stopping time and
\begin{eqnarray*}
|M^{l}_{t\wedge T^{l}_{m}\wedge\tau_{E_{n}}}|&\leq&|M^{l}_{t\wedge
T^{l}_{m}\wedge\tau_{E_{n}}-}|+|\phi(X_{t\wedge
T^{l}_{m}\wedge\tau_{E_{n}}-},X_{t\wedge
T^{l}_{m}\wedge\tau_{E_{n}}})|\\
&\leq& m+|\phi(X_{t\wedge T^{l}_{m}\wedge\tau_{E_{n}}-},X_{t\wedge
T^{l}_{m}\wedge\tau_{E_{n}}})|.
\end{eqnarray*}

We define (cf. \cite[Theorem 5.3]{MMS})
$$
\hat{S}^{*}_{00}:=\{\mu\in S_{0}\,|\,\hat{U}_{1}\mu\leq
c\hat{G}_{1}\phi\ \mbox{for some constant}\ \ c>0\}.
$$
Let $\nu\in S^{*}_{00}$ satisfying $\nu(E)<\infty$. Then, by
\cite[Lemma 5.9]{MMS}, we get
\begin{eqnarray*}
E_{\nu}[(M^{l}_{t\wedge T^{l}_{m}\wedge\tau_{E_{n}}})^{2}]&\leq& 2m^{2}\nu(E)+2E_{\nu}\left[\sum_{s\leq t\wedge\tau_{E_n}}\phi^{2}(X_{s-},X_s)\right]\\
&=&2m^{2}\nu(E)+2E_{\nu}\left[\int^{t\wedge\tau_{E_n}}_{0}\int_{E_{\Delta}}\phi^{2}(X_s,y)N(X_s,dy)dH_{s}\right]\\
&\leq&
2m^{2}\nu(E)+2C_{\nu}(1+t)\int_{E_n}\widetilde{\hat{h}}\int_{E_{\Delta}}\phi^{2}(x,y)N(x,dy)\mu_{H}(dx)\\
&<&\infty,
\end{eqnarray*}
where $C_{\nu}$ is a positive constant. Hence, for fixed $n$ and
$m$, $t\rightarrow M^{l}_{t\wedge T^{l}_{m}\wedge\tau_{E_{n}}}$ is
a square integrable purely discontinuous $P_{\nu}$-martingale. By
\cite[Corollary A.3.1]{Fu94}, we find that
$$
(M^{l}_{t\wedge T^{l}_{m}\wedge\tau_{E_{n}}})^{2}-\sum_{s\leq
t}(\Delta M^{l}_{s\wedge
T^{l}_{m}\wedge\tau_{E_{n}}})^{2}=(M^{l}_{t\wedge
T^{l}_{m}\wedge\tau_{E_{n}}})^{2}-\sum_{s\leq t\wedge
T^{l}_{m}\wedge\tau_{E_n}}\phi^{2}_{l}(X_{s-},X_s)
$$
is a $P_{\nu}-$ martingale, which implies that
\begin{eqnarray*}
E_{\nu}[(M^{l}_{t\wedge\tau_{E_{n}}})^{2}]&\leq&\liminf_{m\rightarrow\infty}E_{\nu}[(M^{l}_{t\wedge
T^{l}_{m}\wedge\tau_{E_{n}}})^{2}]\\
&=&\liminf_{m\rightarrow\infty}E_{\nu}\left[\sum_{s\leq t\wedge T^{l}_{m}\wedge\tau_{E_n}}\phi^{2}_{l}(X_{s-},X_s)\right]\\
&=&E_{\nu}\left[\sum_{s\leq t\wedge\tau_{E_n}}\phi^{2}_{l}(X_{s-},X_s)\right]\\
&\leq& E_{\nu}\left[\int^{t\wedge\tau_{E_n}}_{0}\int_{E_{\Delta}}\phi^{2}(X_s,y)N(X_s,dy)dH_{s}\right]\\
&\leq&C_{\nu}(1+t)\int_{E_n}\widetilde{\hat{h}}(x)\int_{E_{\Delta}}\phi^{2}(x,y)N(x,dy)\mu_{H}(dx)\\
&<&\infty.
\end{eqnarray*}
Thus $\{M^{l}_{t\wedge\tau_{E_n}}\}$ is a
$P_{\nu}$-square-integrable martingale. Since $\{M^{l}_{t\wedge
T^{l}_{m}\wedge\tau_{E_n}}\}^{\infty}_{m=1}$ is
$L^{2}(P_{\nu})-$bounded, by virtue of Banach-Saks theorem, we
obtain that
\begin{eqnarray*}
E_{\nu}[(M^{l}_{t\wedge\tau_{E_n}})^{2}]=E_{\nu}\left[\int^{t\wedge\tau_{E_n}}_{0}\int_{E_{\Delta}}\phi^{2}_{l}(X_s,y)N(X_s,dy)dH_{s}\right].
\end{eqnarray*}

By Doob's maximum inequality, we obtain that for any $\alpha>0$
and $l,k$,
\begin{eqnarray*}
&&P_{\nu}\left(\sup_{0\leq s\leq T}|M^{l}_{s\wedge\tau_{E_n}}-M^{k}_{s\wedge\tau_{E_n}}|>\alpha\right)\\
&&\ \ \
\leq\frac{4C_{\nu}(1+T)}{\alpha^{2}}\int_{E_n}\widetilde{\hat{h}}(x)\int_{E_{\Delta}}(\phi_{l}-\phi_{k})^{2}(x,y)N(x,dy)\mu_{H}(dx).
\end{eqnarray*}
By the diagonal  method, we may select a subsequence
$l_{k}\rightarrow\infty$ such that for each $n$ when $k\geq n$,
\begin{eqnarray*}
\int_{E_n}\widetilde{\hat{h}}(x)\int_{E_{\Delta}}(\phi_{l_{k+1}}-\phi_{l_{k}})^{2}(x,y)N(x,dy)\mu_{H}(dx)\leq\frac{1}{2^{3k}}.
\end{eqnarray*}
Then
\begin{eqnarray*}
P_{\nu}\left(\sup_{0\leq s\leq
T}|M^{l_{k+1}}_{s\wedge\tau_{E_n}}-M^{l_{k}}_{s\wedge\tau_{E_n}}|>\frac{1}{2^{k}}\right)\leq\frac{C_{\nu}(1+T)}{2^{k}}.
\end{eqnarray*}

Define
$\Lambda_{0}^{n}=\{\omega\in\Omega\,|\,M^{l_{k}}_{s\wedge\tau_{E_n}}\mbox{converges
uniformly in $s$ on each finite interval}\}$. Then,
$\Lambda_{0}^{n_1}\supset\Lambda_{0}^{n_2}$ for $n_1\leq n_2$. By
the Borel-Cantelli lemma, we get
\begin{eqnarray*}
P_{\nu}((\Lambda^{n}_{0})^c)=0\ \ \mbox{for} \   \nu\in \hat
S^{*}_{00}\ {\rm with}\ \nu(E)<\infty.
\end{eqnarray*}
Therefore $P_{x}((\Lambda^{n}_{0})^c)=0$ for ${\cal
E}\textrm{-}q.e.\ x\in E$ (cf. \cite[Theorem A.3]{MMS}). Let
$\Gamma_k$ be the defining set of the MAF $M^{l_k}$, denote
$\Gamma=\cap_k\Gamma_k$ and $\Lambda^n=\Lambda^{n}_0\cap\Gamma$.
Then we have $P_{x}((\Lambda^{n})^c)=0$ for ${\cal
E}\textrm{-}q.e.\ x\in E$. For each $\omega\in\Lambda^n$,
$M^{l_k}_{t\wedge\tau_{E_n}}$ converges uniformly in $t$ on each
finite interval and for each $k$,
\begin{eqnarray*}
M^{l_k}_{(t+s)\wedge\tau_{E_n}}=M^{l_k}_{t\wedge\tau_{E_n}}+M^{l_k}_{s\wedge\tau_{E_n}}\circ\theta_{t\wedge\tau_{E_n}},\
\mbox{if} \ 0\leq t,s<\infty.
\end{eqnarray*}
Thus,  $L^{n}$, the limit of
$\{M^{l_{k}}_{s\wedge\tau_{E_n}}\}_{k=1}^{\infty}$, is a
$P_x$-square integrable purely  discontinuous martingale for
${\cal E}\textrm{-}q.e.\ x\in E$ and satisfies:
\begin{eqnarray*}
L^{n}_{(t+s)\wedge\tau_{E_n}}=L^{n}_{t\wedge\tau_{E_n}}+L^{n}_{s\wedge\tau_{E_n}}\circ\theta_{t\wedge\tau_{E_n}},\
\mbox{if} \ 0\leq t,s<\infty.
\end{eqnarray*}
 By the above construction, we
find that
$L^{n_{1}}_{t\wedge\tau_{E_{n_{1}}}}=L^{n_{2}}_{t\wedge\tau_{E_{n_{1}}}}$
for $n_{1}\leq n_{2}$. We define
$M^{[u],d}_{t}=L^{n}_{t},t\leq\tau_{E_{n}}$, and
$M^{[u],d}_{t}=L^{n}_{t},t\geq\zeta$, if for some $n$,
$\tau_{E_n}=\zeta<\infty$; $M^{[u],d}_{t}=0,t\geq\zeta$,
otherwise. Then $M^{[u],d}\in{\mathcal{M}}^{I(\zeta)}_{loc}$,
which gives all the jumps of $\tilde{u}(X_t)-\tilde{u}(X_0)$ on
$I(\zeta)$. Since $\{M^{l}_{t}\}$ is an MAF for each $l$, we find
that $\{M^{[u],d}_{t}\}$ is a local MAF by the uniform convergence
on $I(\zeta)$.

We define
$N^{[u]}_{t\wedge\tau_{E_n}}:=\tilde{u}(X_{t\wedge\tau_{E_n}})-\tilde{u}(X_0)-M^{[u],c}_{t\wedge\tau_{E_n}}-M^{[u],d}_{t\wedge\tau_{E_n}}$
for each $n\in\mathbb{N}$. Then $N^{[u]}$ is a local AF of ${\bf
M}$ and $t\rightarrow N^{[u]}_{t\wedge\tau_{E_n}}$ is continuous.
Now we show that the quadratic variation process of $N^{[u]}$ is
zero and hence $N^{[u]}\in {\mathcal{L}}_{c}$. By Fukushima's
decomposition for part processes, we have that for $k>n$,
\begin{eqnarray*}
\widetilde{u_kf_{k}}(X_{t\wedge\tau_{E_n}})-\widetilde{u_kf_{k}}(X_{0})
&=&\widetilde{u_kf_{k}}(X^{V_{k}}_{t\wedge\tau_{E_n}})-\widetilde{u_kf_{k}}(X^{V_{k}}_{0})\\
&=&M^{k,[u_kf_{k}]}_{t\wedge\tau_{E_n}}+N^{k,[u_kf_{k}]}_{t\wedge\tau_{E_n}}\\
&=&M^{k,[u_kf_{k}],c}_{t\wedge\tau_{E_n}}+M^{k,[u_kf_{k}],d}_{t\wedge\tau_{E_n}}+N^{k,[u_kf_{k}]}_{t\wedge\tau_{E_n}}
\end{eqnarray*}
and
$$\tilde{u}(X_{t\wedge\tau_{E_n}})-\tilde{u}(X_0)=M^{[u],c}_{t\wedge\tau_{E_n}}+M^{[u],d}_{t\wedge\tau_{E_n}}+N^{[u]}_{t\wedge\tau_{E_n}}.
$$
Then
\begin{eqnarray*}
N^{[u]}_{t\wedge\tau_{E_n}}
&=&N^{k,[u_kf_{k}]}_{t\wedge\tau_{E_n}}+M^{k,[u_kf_{k}],d}_{t\wedge\tau_{E_n}}-M^{[u],d}_{t\wedge\tau_{E_n}}+\tilde{u}(X_{t\wedge\tau_{E_n}})
-\widetilde{u_kf_{k}}(X_{t\wedge\tau_{E_n}})\\
&=&N^{k,[u_kf_{k}]}_{t\wedge\tau_{E_n}}+M^{k,[u_kf_{k}],d}_{t\wedge\tau_{E_n}}-M^{[u],d}_{t\wedge\tau_{E_n}}+[\tilde{u}(X_{\tau_{E_n}})
-\widetilde{u_kf_{k}}(X_{\tau_{E_n}})]1_{\{\tau_{E_n}\leq t\}}.
\end{eqnarray*}
Define
$A_{t}:=[\tilde{u}(X_{\tau_{E_n}})-\widetilde{u_kf_{k}}(X_{\tau_{E_n}})]1_{\{\tau_{E_n}\leq
t\}}$. Since both $\{M^{k,[u_kf_{k}],d}_{t\wedge\tau_{E_n}}\}$ and
$\{M^{[u],d}_{t\wedge\tau_{E_n}}\}$ are
$\{\mathcal{F}_{t\wedge\tau_{E_n}}\}$-purely discontinuous
martingales, $\tau_{E_n}$ is an
$\{\mathcal{F}_{t\wedge\tau_{E_n}}\}$-stopping time, and
$\tilde{u}(X_{\zeta})=\widetilde{u_kf_{k}}(X_{\zeta})=0$, we find
that $\{A_{t}\}$ is an adapted quasi-left continuous bounded
variation processes. Denote by $\{A^{p}_{t}\}$ the dual
predictable projection of $A$. Then $\{A^{p}_{t}\}$
 is an adapted continuous bounded variation
processes (cf. \cite[Theorem A.3.5]{Fu94}. Note that
\begin{eqnarray*}
N^{[u]}_{t\wedge\tau_{E_n}}=N^{k,[u_kf_{k}]}_{t\wedge\tau_{E_n}}
+(M^{k,[u_kf_{k}],d}_{t\wedge\tau_{E_n}}-M^{[u],d}_{t\wedge\tau_{E_n}}+A_{t}-A^{p}_{t})+A^{p}_{t}.
\end{eqnarray*}
Hence
$M^{k,[u_kf_{k}],d}_{t\wedge\tau_{E_n}}-M^{[u],d}_{t\wedge\tau_{E_n}}+A_{t}-A^{p}_{t}$
is a purely discontinuous martingale with zero jump, which must be
equal to zero. Consequently,  $N^{[u]}_{t\wedge\tau_{E_n}}$ has
zero quadratic variation w.r.t. $P_{x}$ for ${\cal
E}{\textrm{-}q.e.}\ x\in E$.

Finally, we prove the uniqueness of decomposition (\ref{new3}).
Suppose that $M^1\in{\mathcal{M}}^{I(\zeta)}_{loc}$ and
$N^1\in{\mathcal{L}}_{c}$ such that
$$
\tilde{u}(X_{t})-\tilde{u}(X_{0})=M^1_{t}+N^1_{t},\ \ t\ge0,\ \
P_{x}{\textrm{-}a.s.}\ \ {\rm for}\ {\cal E}{\textrm{-}q.e.}\ x\in
E.
$$
By Proposition \ref{prop00} below, we can choose an
$\{E_n\}\in\Theta$ such that $I(\zeta)=\cup_n
[\![0,\tau_{E_n}]\!]$ $ P_{x}{\textrm{-}a.s.}\ \ {\rm for}\ {\cal
E}{\textrm{-}q.e.}\ x\in E$. Then, for each $n\in\mathbb{N}$,
$\{(M^{[u]}-M^1)^{\tau_{E_n}}\}$ is a locally square integrable
martingale and a zero quadratic variation process w.r.t. $P_m$.
This implies that $P_m(<(M^{[u]}-M^1)^{\tau_{E_n}}>_t=0, \forall
t\in[0,\infty))=0$. Consequently by the analog of \cite[Lemma
5.1.10]{Fu94} in the semi-Dirichlet forms setting,
$P_x(<(M^{[u]}-M^1)^{\tau_{E_n}}>_t=0, \forall t\in[0,\infty))=0$
${\rm for}\ {\cal E}{\textrm{-}q.e.}\ x\in E$. Therefore
$M_t^{[u]}=M_t^1$, $0\le t\le\tau_{E_n}$, $P_{x}{\textrm{-}a.s.}\
{\rm for}\ {\cal E}{\textrm{-}q.e.}\ x\in E$. Since $n$ is
arbitrary, we obtain the uniqueness of decomposition (\ref{new3})
up to the equivalence of local AFs.

(b) Let $u\in D({\cal E})_{loc}$ and suppose that the
decomposition (\ref{new3}) holds. We shall show that $u$ satisfies
Condition (S). First,
$M^{[u],d}\in{\mathcal{M}}^{d,I(\zeta)}_{loc}$ implies that there
exist a sequence of increasing stopping times $\{T_n\}$ such that
$\cup_n[\![0,T_n]\!]=I(\zeta)$ and a sequence of $L^2$-martingales
$\{M^n\}$ such that
$(M^{[u],d}1_{I(\zeta)})^{T_n}=(M^n1_{I(\zeta)})^{T_n}$. Hence
$(M^{[u],d})^{T_n}$  is an $L^2$-martingale and its square bracket
equals $\sum_{0<s\leq t\wedge T_n}(u(X_s)-u(X_{s-}))^{2}$ and is a
integrable increasing process. We use $[M^{[u],d}](t,\omega)$ to
denote $(\sum_{0<s\leq
t}(u(X_s(w))-u(X_{s-}(w)))^{2})1_{I(\zeta)}(t,w)$. Then,
$[M^{[u],d}]\in(\mathcal{A}_{loc,0})^{I(\zeta)}$ (cf.
\cite[\S8.3]{HWY}) and is a local AF. Therefore
$<M^{[u],d}>_t=(\int_0^{t}\int_{E_{\Delta}}(\tilde u(X_s)-\tilde
u(y))^2N(X_s,dy)dH_s)1_{I(\zeta)}$ is a PCAF on $I(\zeta)$ and can
be extended to a PCAF by \cite[Remark 2.2]{Chen}. By Proposition
\ref{lem:Revuz}, its Revuz measure
$\mu^{'}_{u}(dx)=\int_{E_{\Delta}}(\tilde u(x)-\tilde
u(y))^2N(x,dy)\mu_H(dx)$ is a smooth measure. Thus
$\mu_{u}(dx)=\int_E(\tilde u(x)-\tilde u(y))^2J(dy,dx)$, which is
controlled by $\mu^{'}_{u}(dx)$, is also a smooth measure. This
implies that $u$ satisfies Condition (S). \hfill\fbox

\section {Remarks on stochastic sets of interval type}
\label{sec:uniqueness} \setcounter{equation}{0}

For the convenience of the reader, we recall first some concepts
and results concerning sets of interval type given  in
\cite[\S8.3]{HWY}. Let $(\Omega,\mathcal{F},P)$ be a complete
probability space with a filtration $\{\mathcal{F}_t\}$ satisfying
the usual condition. A subset $B\subset\Omega\times[0,\infty)$ is
said to be a set of interval type  if there exists a nonnegative
random variable $T$ such that for each $\omega\in\Omega$, the
section $B_{\omega}$ is either $[0,T(\omega)[$ or $[0,T(\omega)]$
and $B_{\omega}\neq\varnothing$. $B$ is called an optional (resp.
predictable) set of interval type, if it is an optional (resp.
predictable) set and is of interval type.

Let $B$ be an optional set of interval type. A stochastic process
$Y$  defined on $B$ is  called a special semi-martingale  on $B$,
denoted by $({\mathcal{S}}_p)^{B}$, if there exist  a sequence of
increasing stopping times $\{T_n\}$ with $T_n\uparrow T$ ($T$ is
the debut of $B^c$), and a sequence of special semi-martingales
$\{Y^n\}$ such that, $\cup_n[\![0,T_n]\!]\supset B$ and for each
$n$ and $t>0$, $(Y1_{B})_{t\wedge T_n}=(Y^n1_{B})_{t\wedge T_n}$.
In the same manner one can define  local martingale on $B$
(denoted by $({\mathcal{M}}_{loc})^{B}$), adapted process with
locally integrable variation on $B$ (denoted by
$({\mathcal{A}}_{loc})^{B}$), and others (cf. \cite[Definition
8.19]{HWY}).

The assertion below, which is referred as Doob-Meyer decomposition
on sets of interval type, was stated  in \cite[Theorem 8.26]{HWY}.
\vskip 0.2cm \noindent{\bf Assertion.}
   Let $B$  be an optional set of interval type  and
$Y\in(\mathcal{S}_p)^B$. Then $Y$ can be uniquely decomposed as:
$Y=M+A$, where $M\in(\mathcal{M}_{loc})^B$ and
$A\in(\mathcal{A}_{loc,0})^B$ is a predictable process (i.e., $A$
is the restriction of a predictable process on $B$.). \vskip 0.2cm
Although the above assertion has been employed by several papers
(including our previous paper \cite{MMS}),  during the course of
our research we observed the following remark.
\begin{rem}
In the above assertion if $B$ is not a predictable set of interval
type, then the uniqueness of the decomposition $Y=M+A$ may fail to
be true.
\end{rem}
\begin{proof} We take just the  counterexample stated in \cite[Remark 8.24]{HWY} to illustrate our remark.
Let $T>0$ be a totally inaccessible time with $P(T<\infty)>0$,
e.g., the first jump time of a Poisson process. We consider the
stochastic interval $B=[\![0,T[\![$. Then $B$ is an optional set
of interval type but not a predictable set.  Let
$A_t:=1_{[\![T,\infty[\![}(t)$ and $\tilde{A}_t$ be its dual
predictable projection. Let $\{Y_t, 0\leq t< T\}$ be the
restriction of $\tilde{A}$ on $B.$ Then we have decomposition
$Y=M+0$ where $M\in(\mathcal{M}_{loc})^B$ is the restriction of
$\tilde{A}-A$ on $B.$ But we have also another decomposition
$Y=0+Y$ where $Y\in(\mathcal{A}_{loc,0})^B$ is the restriction of
$\tilde{A}$ on $B.$ Therefore the decomposition stated in the
above assertion is not unique.
\end{proof}

The above remark reveals that Doob-Meyer decomposition may fail to
be unique on an optional set of interval type. In the same manner,
we observe that the Fukushima type decomposition may fail to be
unique on an optional set of interval type. Note that with the
notation of Theorem \ref{thm3.2}, $[\![0,\zeta[\![$ is an optional
set of interval type but is not necessarily a predictable set.
\begin{rem} In Theorem \ref{thm3.2} if we use ${\mathcal{M}}^{[\![0,\zeta[\![}_{loc}$
instead of ${\mathcal{M}}^{I(\zeta)}_{loc}$ , then the uniqueness
of the decomposition may fail to be true.
\end{rem}
\begin{proof}
We provide below a counterexample to illustrate the remark.
Suppose that we have a decomposition
\begin{eqnarray*}\label{new4}
\tilde{u}(X_{t})-\tilde{u}(X_{0})=M^{[u]}_{t}+N^{[u]}_{t},\ \
t\ge0,\ \ P_{x}{\textrm{-}a.s.}\ \ {\rm for}\ {\cal
E}{\textrm{-}q.e.}\ x\in E,
\end{eqnarray*}
with $M^{[u]}\in{\mathcal{M}}^{[\![0,\zeta[\![}_{loc}$ and
$N^{[u]}\in {\mathcal{L}}_{c},$ and suppose that $\zeta_i=\zeta$
with $P_x(\zeta<\infty )> 0$ $P_{x}{\textrm{-}a.s.}$ for ${\cal
E}{\textrm{-}q.e.}\ x\in E$.  We write $B_t:=1_{\{\zeta \leq t\}}$
(i.e. $B_t=I_{\Delta}(X_t)$) and denote by $\tilde{B}_t$ the dual
predictable projection of $B_t$. Define
$A_t:=\tilde{B}_t1_{\{0\leq t<\zeta\}}.$ Then  it is clear that
$A\in \mathcal{L}_{c}.$ But we have also
$A\in(\mathcal{M}_{loc})^{[\![0,\zeta[\![}$, because
$\{A1_{[\![0,\zeta[\![}\}^{\zeta}=\{(\tilde{B}-B)1_{[\![0,\zeta[\![}\}^{\zeta}$.
Therefore, we have another decomposition:
\begin{eqnarray*}\label{new4}
\tilde{u}(X_{t})-\tilde{u}(X_{0})=(M^{[u]}_{t}-A_t)+(N^{[u]}_{t}+A_t),\
\ t\ge0,\ \ P_{x}{\textrm{-}a.s.}\ \ {\rm for}\ {\cal
E}{\textrm{-}q.e.}\ x\in E,
\end{eqnarray*}
which violates the uniqueness.
\end{proof}
With the above discussion, we see that the existence of a suitable
predictable set of interval type is important for the uniqueness
of the Fukushima type decomposition. Fortunately in Theorem
\ref{thm3.2} we find such a suitable set
$I(\zeta):=[\![0,\zeta[\![\cup[\![\zeta_i]\!].$ In Proposition
\ref{prop00} below we shall provide a proof for the existence and
uniqueness of such $\zeta_i.$ We shall need the following
characterizations for a set of interval type  to be predictable.
For their proofs we refer to \cite{HWY}.
\begin{lem}\label{yan1}(\cite[Theorems 8.18]{HWY})
The following statements are equivalent:

(i) $B$ is a predictable set of interval type.

(ii) $1_B=1_F1_{[\![0,T[\![}+1_{F^c}1_{[\![0, T]\!]},$ where $T$
is a stopping time, $F\in\mathcal{F}_{T-}$ and $T_F>0$ is a
predictable time.

(iii) $B=\cup_n[\![0,T_n]\!]$, where $\{T_n\}$ is an increasing
sequence of stopping times.
\end{lem}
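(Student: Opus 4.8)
The plan is to prove the three equivalences as a cycle $(iii)\Rightarrow(i)\Rightarrow(ii)\Rightarrow(iii)$, resting on two elementary building blocks (cf. \cite{HWY}): for any stopping time $S$ the stochastic interval $[\![0,S]\!]$ is \emph{always} predictable, since its indicator $1_{\{t\le S\}}$ is left-continuous and adapted (equivalently $[\![0,S]\!]=(\{0\}\times\Omega)\cup]\!]0,S]\!]$, a union of predictable sets), whereas $[\![0,S[\![$ is predictable precisely when $S$ is a predictable time. I shall also use the classical fact that a $[0,\infty]$-valued random time is predictable if and only if its graph is a predictable set, together with Lemma \ref{KKK} quoted above.

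For $(iii)\Rightarrow(i)$ I would simply note that the countable union of the predictable sets $[\![0,T_n]\!]$ is predictable, and that the $\omega$-sections $\cup_n[0,T_n(\omega)]$ are nested and hence equal either $[0,\sup_nT_n(\omega))$ or $[0,\sup_nT_n(\omega)]$; since they contain $0$ they are nonempty, so $B$ is a predictable set of interval type.

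For $(i)\Rightarrow(ii)$, let $T$ be the debut of $B^c$; because $B$ is predictable (hence optional) the debut theorem makes $T$ a stopping time, and since each section of $B$ is an interval with left endpoint $0$, $T$ is exactly the right endpoint, so $[\![0,T[\![\subseteq B\subseteq[\![0,T]\!]$. Put $F:=\{T<\infty,\ (\cdot,T)\notin B\}$, the set on which the endpoint is omitted; then by construction $1_B=1_F1_{[\![0,T[\![}+1_{F^c}1_{[\![0,T]\!]}$. Applying Lemma \ref{KKK} to the predictable process $1_B$ and the stopping time $T$ shows $1_B(\cdot,T)1_{\{T<\infty\}}=1_{F^c}1_{\{T<\infty\}}\in\mathcal{F}_{T-}$, whence $F\in\mathcal{F}_{T-}$. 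The random time $T_F:=T1_F+\infty 1_{F^c}$ is then a stopping time whose graph satisfies $[\![T_F]\!]=[\![0,T]\!]\setminus B$; as a difference of predictable sets this graph is predictable, so $T_F$ is a predictable time, and $T_F>0$ because a nonempty half-open section $[0,T(\omega))$ forces $T(\omega)>0$.

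Finally, for $(ii)\Rightarrow(iii)$ I would exploit the identity $B=[\![0,T_F[\![\cap[\![0,T]\!]$, which one checks sectionwise ($[0,T)$ on $F$ and $[0,T]$ on $F^c$). Choosing an announcing sequence $R_n\uparrow T_F$ with $R_n<T_F$ (possible since $T_F>0$) gives $[\![0,T_F[\![=\cup_n[\![0,R_n]\!]$, and therefore $B=\cup_n([\![0,R_n]\!]\cap[\![0,T]\!])=\cup_n[\![0,R_n\wedge T]\!]$; the times $T_n:=R_n\wedge T$ are increasing stopping times, giving (iii). The step I expect to be the main obstacle is the assertion in $(i)\Rightarrow(ii)$ that $T_F$ is predictable: it hinges on the nontrivial characterization that a random time with predictable graph is a predictable time, i.e.\ on the predictable section theorem, which is the deepest ingredient of the argument, whereas the measurability bookkeeping for $F$ is handled cleanly by Lemma \ref{KKK}.
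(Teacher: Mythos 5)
The paper gives no proof of this lemma: it is quoted from \cite[Theorem 8.18]{HWY} with the explicit remark ``for their proofs we refer to \cite{HWY}'', so there is no internal argument to compare yours against. Your cyclic proof $(iii)\Rightarrow(i)\Rightarrow(ii)\Rightarrow(iii)$ is correct and is essentially the standard reconstruction. The individual steps all check out: in $(iii)\Rightarrow(i)$ the sets $[\![0,T_n]\!]$ are predictable because their indicators are adapted and left-continuous, and the nested union of the sections is indeed of the form $[0,\sup_nT_n)$ or $[0,\sup_nT_n]$; in $(i)\Rightarrow(ii)$ the debut theorem (which needs the usual conditions, assumed in the paper's setting) makes $T$ a stopping time, Lemma \ref{KKK} applied to the predictable process $1_B$ gives $F\in\mathcal{F}_{T-}$, and the identity $[\![T_F]\!]=[\![0,T]\!]\setminus B$ is verified sectionwise; in $(ii)\Rightarrow(iii)$ the identity $B=[\![0,T_F[\![\cap[\![0,T]\!]$ and the announcing sequence (valid on all of $\Omega$ since $T_F>0$ everywhere) deliver the increasing stopping times $T_n=R_n\wedge T$. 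You correctly isolate the one genuinely deep ingredient, namely that a stopping time with predictable graph is a predictable time, which rests on the predictable section theorem; since the lemma is itself imported from \cite{HWY}, invoking that earlier result of the same reference is legitimate, and no gap remains.
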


Below we consider a quasi-regular semi-Dirichlet form  $({\cal
E},D({\cal E}))$ on $L^2(E;m).$  Let ${\bf M}=(\Omega,{\cal
F},({\cal F}_t)_{t\ge 0}, (X_t)_{t\ge 0},(P_x)_{x\in E_{\Delta}})$
with lifetime $\zeta$ be the associated  $m$-tight special
standard process.

\begin{prop}\label{prop00}

(i) There exists an $\{{\cal F}_t\}$-stopping time $\zeta_i$  (may
be identically equal to $\infty$) which is the totally
inaccessible part of $\zeta$ w.r.t. $P_{x}$ for ${\cal
E}{\textrm{-}q.e.}\ x\in E$. Such a $\zeta_i$ is unique in the
sense of $P_{x}{\textrm{-}a.s.}$ for ${\cal E}{\textrm{-}q.e.}\
x\in E$.

(ii) Denote by $I(\zeta):=[\![0,\zeta[\![\cup[\![\zeta_i]\!]$.
Then $I(\zeta)$ is a predictable set of interval type,  and there
exists a sequence $\{V_n\}\in\Theta$ such that for any
$\{U_n\}\in\Theta$, $I(\zeta)=\cup_n [\![0,\tau_{V_n\cap U_n}]\!]$
$ P_{x}{\textrm{-}a.s.}\ \ {\rm for}\ {\cal E}{\textrm{-}q.e.}\
x\in E$.
\end{prop}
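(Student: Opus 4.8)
The plan is to construct $\zeta_i$ explicitly as the part of the lifetime at which $X$ reaches the cemetery by a jump, and then to read off both the interval-type and the predictability of $I(\zeta)$ from the exit times of the sets in $\Theta$.

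First I would settle (i). Set $\Lambda:=\{0<\zeta<\infty,\ X_{\zeta-}\in E\}$, the event that $X$ reaches $\Delta$ by a genuine jump from a point of $E$, and define $\zeta_i:=\zeta$ on $\Lambda$ and $\zeta_i:=\infty$ on $\Omega\setminus\Lambda$. Since $X$ has left limits on $(0,\zeta)$ and $\{X_{\zeta-}\in E\}\in\mathcal{F}_{\zeta-}$, the map $\zeta_i$ is an $\{\mathcal{F}_t\}$-stopping time. To see that it is the totally inaccessible part of $\zeta$ under $P_x$ for $\mathcal{E}$-q.e. $x$, I would verify two facts. On $\Lambda$ the jump of $X$ to $\Delta$ is governed by the L\'evy system $(N(x,dy),H_s)$ with $H$ continuous, so these jump times meet no predictable time and $\zeta_{\Lambda}$ is totally inaccessible. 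On $\Omega\setminus\Lambda$, where $X$ tends to $\Delta$ without jumping, the exit times $\tau_{V_n}$ of any $\{V_n\}\in\Theta$ increase strictly to $\zeta$ and hence announce it, so $\zeta_{\Omega\setminus\Lambda}$ is predictable, in particular accessible. Uniqueness is then immediate: under each fixed $P_x$ the totally inaccessible part of a stopping time is unique up to $P_x$-a.s. equality (cf. \cite{HWY}), so any two candidates for $\zeta_i$ agree $P_x$-a.s. for $\mathcal{E}$-q.e. $x$.

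For (ii) I would fix once and for all a sequence $\{V_n\}\in\Theta$ of relatively compact quasi-open sets exhausting $E$ q.e., and for an arbitrary $\{U_n\}\in\Theta$ put $T_n:=\tau_{V_n\cap U_n}$; since $\{V_n\cap U_n\}\in\Theta$, the $T_n$ are increasing stopping times with $T_n\uparrow\zeta$ q.e. I would then determine the sections of $\cup_n[\![0,T_n]\!]$ path by path. On $\Lambda$ the path $t\mapsto X_t$ on $[0,\zeta)$ is right continuous with left limits and has a left limit $X_{\zeta-}\in E$, so its range has compact closure $K\subset E$; since $X$ charges no $\mathcal{E}$-exceptional set, $K\subset V_n\cap U_n$ for all large $n$ q.e., whence $T_n=\zeta$ and the section is the closed interval $[0,\zeta]$, matching the contribution of $[\![\zeta_i]\!]$ since $\zeta_i=\zeta$ there. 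Off $\Lambda$ either $\zeta=\infty$, in which case the section is $[0,\infty)$, or $\zeta<\infty$ with $X_{\zeta-}=\Delta$, in which case the range up to $\zeta$ is unbounded, $X$ leaves each relatively compact $V_n\cap U_n$ strictly before $\zeta$, and $T_n<\zeta$ for every $n$, giving the half-open section $[0,\zeta[$ matching $[\![0,\zeta[\![$. This yields $I(\zeta)=\cup_n[\![0,T_n]\!]$ $P_x$-a.s. for $\mathcal{E}$-q.e. $x$, which shows simultaneously that every section of $I(\zeta)$ is $[0,\zeta[$ or $[0,\zeta]$, so that $I(\zeta)$ is of interval type, and, by Lemma \ref{yan1}(iii), that $I(\zeta)$ is predictable.

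The main obstacle will be (i): making the accessible--totally inaccessible splitting hold simultaneously for $\mathcal{E}$-q.e. starting point $x$ rather than merely under a single $P_x$. One must control the exceptional sets so that the defining event $\Lambda$, the announcing property of $\tau_{V_n}$ off $\Lambda$, and the total inaccessibility of the jump on $\Lambda$ all hold outside one common $\mathcal{E}$-exceptional set, using the quasi-left-continuity of the $m$-tight special standard process and the L\'evy system representation of its jumps. The same q.e.-control is the delicate point in (ii), namely ensuring that the compact range $K$ (including the random endpoint $X_{\zeta-}$) is eventually absorbed by $V_n\cap U_n$; once this is in place, the path-by-path identification of the sections is routine, the only remaining care being that relative compactness of the $V_n$ forces a strict exit before $\zeta$ on $\{\zeta<\infty\}\setminus\Lambda$.
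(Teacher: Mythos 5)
Your construction is at bottom the same as the paper's. The paper also splits $\zeta$ according to whether the process is killed from inside a relatively compact set (which is a.s. your event $\Lambda=\{X_{\zeta-}\in E\}$) or drifts out of every such set, defines $\zeta_i$ as the restriction of $\zeta$ to the first alternative, identifies $I(\zeta)$ with $\cup_n[\![0,\tau_{V_n\cap U_n}]\!]$ by exactly the section-by-section analysis you describe, and concludes predictability from Lemma \ref{yan1}. The only structural difference is that the paper encodes $\Lambda^c$ as $F=\{T<\infty,\ (\omega,T(\omega))\in B^c\}$ with $B=\cup_n[\![0,\tau_{V_n}]\!]$ and $T=\lim_n\tau_{V_n}$, so that $F\in\mathcal{F}_{T-}$ and the predictability of $T_F$ come for free from \cite[Theorem 8.18]{HWY}, rather than having to be extracted from $\{X_{\zeta-}\in E\}\in\mathcal{F}_{\zeta-}$.

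There is, however, one substantive gap in your argument for (i). You assert that on $\Lambda$ the jump to $\Delta$ ``is governed by the L\'evy system $(N(x,dy),H_s)$ with $H$ continuous, so these jump times meet no predictable time.'' For an $m$-tight special standard process this is precisely the point at issue: such a process is quasi-left-continuous only on $[\![0,\zeta[\![$, and a priori it could have an accessible, even predictable, killing time with $X_{\zeta-}\in E$, which a L\'evy system with continuous compensating PCAF simply does not see. The paper closes this by first invoking the local compactification method (\cite[Theorem 3.5]{HC06}, \cite[Theorem VI.1.6]{MR92}) to replace $X$ by a Hunt process on a locally compact space; quasi-left-continuity of the Hunt process on all of $[0,\infty)$ together with compactness of $\overline{V}_n$ then yields $P_x\{S=\tau_{V_n}=\zeta<\infty\}=0$ for every predictable time $S$, which is the total inaccessibility you need. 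Without this reduction (or an equivalent argument) your key step assumes the conclusion. Two smaller slips: in (i) you claim the exit times of \emph{any} $\{V_n\}\in\Theta$ strictly increase to $\zeta$ off $\Lambda$, which fails e.g.\ for $V_n\equiv E$ --- relative compactness is essential there too; and in (ii) the inclusion $K\subset V_n\cap U_n$ for large $n$ cannot be read off from compactness of $K$ when the $U_n$ are merely quasi-open and exhaust $E$ only $\mathcal{E}$-q.e. --- one should argue via $\tau_{V_n\cap U_n}\uparrow\zeta$ $\mathcal{E}$-q.e. and quasi-left-continuity on $[\![0,\zeta[\![$, as the paper implicitly does.
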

\begin{proof}  The uniqueness of $\zeta_i$ follows from \cite[Theorem 4.20]{HWY}.  Below we show the existence of $\zeta_i$ and the assertion (ii).
By the local compactification method (cf. \cite[Theorem
3.5]{HC06}, see also \cite[Theorem VI.1.6]{MR92}) in the
semi-Dirichlet forms setting, we may assume without loss of
generality that $(X_t)_{t\ge 0}$ is a Hunt process and $E$ is a
locally compact separable metric space.

We take a fixed sequence $\{V_n\}\in\Theta$ such that each $V_n$
is a relatively compact open set and  $E=\cup_nV_n$. Denote by
$B:=\cup_n [\![0,\tau_{V_n}]\!]$ and $T:= \lim_{n\rightarrow
\infty} \tau_{V_n}.$  Set $F=\{\omega\,|\,T(\omega)<\infty,
(\omega,T(\omega))\in B^c\}.$ By Lemma \ref{yan1}, for each $P_x,$
it holds that $B$ is a predictable set of interval type, $T$ is an
$\{{\cal F}_t\}$-stopping time, $F\in{\cal F}_{T-},$ $T_{F}:=T
I_{F}+(+\infty)I_{F^c}$ is a predictable time, and
$1_B=1_F1_{[\![0,T[\![}+1_{F^c}1_{[\![0,
T]\!]}=1_{[\![0,T[\![}+1_{[\![ T_{F^c}]\!]}.$  Let $\zeta$ be the
lifetime of $(X_t)_{t\ge 0}$, we define
$$
\zeta_i=\zeta_{F^c}:=\zeta I_{F^c}+(+\infty)I_F.
$$
Note that for ${\cal E}{\textrm{-}q.e.}\ x\in E,$  we have
$\tau_{V_n}\uparrow \zeta=T ~P_x{\textrm{-}a.s.}$, therefore
$I(\zeta):=[\![0,\zeta[\![\cup[\![\zeta_i]\!]
=[\![0,T[\![\cup[\![T_{F^c}]\!]=B$ is a predictable set of
interval type.
 Moreover, by the quasi-left continuity of Hunt process and the assumption that $V_n$ has compact closure, we find that for any $n$ and $x\in E$,
$P_x\{S=\tau_{V_n}=\zeta<\infty\}=0$  for any predictable time
$S.$ Hence $\zeta_{i}=T_{F^c}$ is the totally inaccessible part of
$\zeta$ w.r.t. $P_{x}$ for ${\cal E}{\textrm{-}q.e.}\ x\in E$.
Finally, for arbitrary $\{U_n\}\in\Theta,$ we have $\tau_{V_n\cap
U_n}\uparrow \zeta=T ~P_x~a.s.$ for ${\cal E}{\textrm{-}q.e.}\
x\in E.$ Therefore $I(\zeta)=\cup_n [\![0,\tau_{V_n\cap U_n}]\!]$
$ P_{x}{\textrm{-}a.s.}\ \ {\rm for}\ {\cal E}{\textrm{-}q.e.}\
x\in E,$ which completes the proof.
\end{proof}

\section {Transformation formula for MAFs}
\label{sec:transform} \setcounter{equation}{0}

In this section, we give a transformation formula for MAFs. We
adopt the setting of Section \ref{Sec:Fukushima}. Suppose that
$(\mathcal{E},D(\mathcal{E}))$ is a quasi-regular semi-Dirichlet
form on $L^{2}(E;m)$ satisfying Assumption \ref{assum1}. From the
proof of Theorem \ref{thm3.2}, we can see that $M^{[u],c}$ is well
defined whenever $u\in D(\mathcal{E})_{loc}$. Below is the main
result of this section.
\begin{thm}\label{Ito} Suppose that
$(\mathcal{E},D(\mathcal{E}))$ is a quasi-regular semi-Dirichlet
form on $L^{2}(E;m)$ satisfying Assumption \ref{assum1}. Let
$m\in\mathbb{N}$, $\Phi\in C^1(\mathbb{R}^m)$, and
$u=(u_1,u_2,\dots, u_m)$ with $u_i\in D(\mathcal{E})_{loc}$, $1\le
i\le m$. Then $\Phi(u)\in D(\mathcal{E})_{loc}$ and
$$
M^{[\Phi(u)],c}=\sum_{i=1}^m\Phi_{x_i}(u)\cdot M^{[u_i],c}\ {\rm
on}\ I(\zeta),\ \ P_{x}{\textrm{-}a.s.}\ \ {\rm for}\ {\cal
E}{\textrm{-}q.e.}\ x\in  E.
$$
\end{thm}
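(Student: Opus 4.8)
The strategy is to reduce the multidimensional $C^1$ transformation formula to the localized, bounded setting where the part-process Fukushima decomposition of Lemma \ref{thm3.6} applies, and then to exploit the established stability of the continuous martingale part under localization (Lemma \ref{lem3.7}). First I would verify that $\Phi(u) \in D(\mathcal{E})_{loc}$. Working on the sets $V_n \cap V'_n$ constructed in the proof of Theorem \ref{thm3.2}, each $u_i$ agrees $m$-a.e. with some $u_{i,n} \in D(\mathcal{E})$ that is bounded on $V_n$; since $\Phi \in C^1(\mathbb{R}^m)$ is Lipschitz on the relevant bounded range, $\Phi(u_{1,n},\dots,u_{m,n})$ lies in $D(\mathcal{E})$ after the usual truncation-and-normal-contraction argument, using Assumption \ref{assum1} to control the energy of products. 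This places $\Phi(u)$ in $D(\mathcal{E})_{loc}$ and, as remarked before the theorem, guarantees that $M^{[\Phi(u)],c}$ is well defined.

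The core of the argument is to establish the formula on each stochastic interval $[\![0,\tau_{E_n}]\!]$ and then paste. On $V_n$ the functions are bounded, so I would reduce to the classical chain rule for the continuous MAF part of a single part process $X^{V_n}$. For $C^1$ (rather than $C^2$) functions the standard route is first to prove it for $\Phi$ a polynomial, or more precisely for products $\Phi(x)=x_ix_j$ via the energy-image-density / Leibniz-type identity $M^{[u_iu_j],c}=u_i\cdot M^{[u_j],c}+u_j\cdot M^{[u_i],c}$ on $V_n$ (which follows from the defining property $e^{V_n}$ and the integration-by-parts structure of the form together with Assumption \ref{assum1}), and then to pass to general $\Phi\in C^1$ by uniform approximation of $\Phi$ and $\nabla\Phi$ on compacta and an $L^2$-energy estimate showing $\langle M^{[\Phi_k(u)],c}-M^{[\Phi(u)],c}\rangle_{t\wedge\tau_{E_n}}\to 0$. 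Crucially, by Lemma \ref{lem3.7} the continuous parts $M^{[u_i],c}$ and $M^{[\Phi(u)],c}$ are independent of the localizing index when stopped at $\tau_{E_n}$, so the identity proved for $X^{V_n}$ is exactly the restriction to $[\![0,\tau_{E_n}]\!]$ of the global identity.

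Having the formula on each $[\![0,\tau_{E_n}]\!]$, I would invoke Proposition \ref{prop00} to conclude. Since $\cup_n [\![0,\tau_{E_n}]\!] \supseteq I(\zeta)$ holds $P_x$-a.s.\ for $\mathcal{E}$-q.e.\ $x$, and both sides of the claimed identity are elements of $\dot{\mathcal{M}}_{loc}$ agreeing on every $[\![0,\tau_{E_n}]\!]$, their difference is a continuous local MAF of zero quadratic variation on $I(\zeta)$, hence identically zero there by the uniqueness argument already used at the end of part (a) of the proof of Theorem \ref{thm3.2} (the q.e.\ refinement via the analog of \cite[Lemma 5.1.10]{Fu94}). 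This yields the equality on $I(\zeta)$.

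**The main obstacle.** I expect the delicate point to be the passage from $C^2$ (or polynomial) to merely $C^1$ functions $\Phi$. With only $\Phi\in C^1(\mathbb{R}^m)$ one cannot use It\^o's formula directly, so the chain rule for $M^{[\cdot],c}$ must be obtained through the quadratic-form/energy characterization of $M^{[\cdot],c}$ and a careful approximation argument: one approximates $\Phi$ by smooth $\Phi_k$ with $\Phi_k\to\Phi$ and $\nabla\Phi_k\to\nabla\Phi$ uniformly on the compact range of $u$ over $V_n$, and then controls $e^{V_n}\big(M^{[\Phi_k(u)],c}-M^{[\Phi_\ell(u)],c}\big)$ using Assumption \ref{assum1} to bound the energies of the approximants uniformly. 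Verifying that this approximation is uniform enough to pass the limit through the energy norm — and that the limiting object coincides with $\sum_i \Phi_{x_i}(u)\cdot M^{[u_i],c}$ as a stochastic integral against the continuous MAFs — is where the real work lies; the localization and pasting are then routine given Lemma \ref{lem3.7} and Proposition \ref{prop00}.
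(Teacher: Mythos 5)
Your proposal is correct and follows essentially the same route as the paper: localize to the $V_n$ where everything is bounded, establish a Leibniz-type identity for the continuous martingale parts, extend to general $\Phi\in C^1$ by approximation, and paste via Lemma \ref{lem3.7} and Proposition \ref{prop00}. The only caveat is that you locate the main difficulty in the $C^1$ approximation, whereas the paper's real labor is in Theorem \ref{th4.5} — proving the derivation property $d\mu^{n,c}_{<uv,w>}=\tilde{u}\,d\mu^{n,c}_{<v,w>}+\tilde{v}\,d\mu^{n,c}_{<u,w>}$ for the \emph{continuous-part} energy measures, which for a non-local form requires carefully isolating and controlling the jump-part cross terms (the correction $\eta^{(n)}_{u,w}$ and the mixed moments of $M^{n,[\cdot],c}$ and $M^{n,[\cdot],d}$) that your phrase ``the integration-by-parts structure of the form together with Assumption \ref{assum1}'' passes over.
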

 The proof of the theorem will be accomplished at the end of this section by employing Theorem \ref{th4.5} below.

We fix a $\{{V_n}\}\in\Theta$ satisfying Assumption \ref{assum1}
and  such that $\widetilde{\hat{h}}$ is bounded on each $V_n$. Let
$X^{V_n}$, $(\mathcal{E}^{V_n},D(\mathcal{E})_{V_n})$,
$\bar{h}_n$, etc. be the same as in Section 2.
 For $u\in D(\mathcal{E})_{V_n,b}$, we denote by $\mu^{(n)}_{<u>}$ the Revuz
 measure of $<M^{n,[u]}>$.
 For $u,v\in D(\mathcal{E})_{V_n,b}$, we define
\begin{equation}\label{p01}
 \mu^{(n)}_{<u,v>}:=\frac{1}{2}(\mu^{(n)}_{<u+v>}-\mu^{(n)}_{<u>}-\mu^{(n)}_{<v>}).
\end{equation}
Similar to \cite[Lemma 3.1]{MMS}, we can prove the following
lemma.
 \begin{lem}\label{lem4.1}
 Let $u,v,f\in D(\mathcal{E})_{V_n,b}$. Then
$$
 \int_{V_n}\tilde{f}d\mu^{(n)}_{<u,v>}=\mathcal{E}(u,vf)+\mathcal{E}(v,uf)
 -\mathcal{E}(uv,f).
$$
 \end{lem}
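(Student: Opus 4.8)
The plan is to establish the identity
$$\int_{V_n}\tilde{f}\,d\mu^{(n)}_{<u,v>}=\mathcal{E}(u,vf)+\mathcal{E}(v,uf)-\mathcal{E}(uv,f)$$
by first reducing to the diagonal case $u=v$ and then polarizing. By the definition (\ref{p01}) of $\mu^{(n)}_{<u,v>}$, the assertion is bilinear in $(u,v)$, so it suffices to prove the quadratic version
$$\int_{V_n}\tilde{f}\,d\mu^{(n)}_{<u>}=2\mathcal{E}(u,uf)-\mathcal{E}(u^2,f)$$
for $u,f\in D(\mathcal{E})_{V_n,b}$; the general statement then follows by applying this to $u+v$, $u$ and $v$ separately and taking the linear combination prescribed by (\ref{p01}), using that $(u+v)^2-u^2-v^2=2uv$. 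Note that the products $u^2$, $uv$, $vf$, $uf$ all lie in $D(\mathcal{E})$ by Assumption \ref{assum1}, so every term on the right-hand side is well-defined.

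The main work is the diagonal identity, which I would prove by the Revuz-measure characterization together with a resolvent approximation, following the pattern of \cite[Lemma 3.1]{MMS}. First I would recall that $\mu^{(n)}_{<u>}$ is the Revuz measure of the PCAF $<M^{n,[u]}>$, so by Lemma \ref{lem:Revuz}, applied to the form $(\mathcal{E}^{V_n},D(\mathcal{E})_{V_n})$ with the $1$-co-excessive function $\bar{h}_n$, one has for suitable $f$
$$\int_{V_n}\tilde{f}\,d\mu^{(n)}_{<u>}=\lim_{t\downarrow0}\frac{1}{t}E_{\bar{h}_n\cdot m}\!\left[\int_0^t f(X^{V_n}_s)\,d<M^{n,[u]}>_s\right].$$
The strategy is then to evaluate this limit analytically. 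Using Lemma \ref{thm3.6}, write $\tilde{u}(X^{V_n}_t)-\tilde{u}(X^{V_n}_0)=M^{n,[u]}_t+N^{n,[u]}_t$, so that $<M^{n,[u]}>$ can be read off from the quadratic variation of $\tilde u(X^{V_n})$ modulo the zero-energy part. Expanding $E_{\bar{h}_n\cdot m}[(\tilde u(X^{V_n}_t)-\tilde u(X^{V_n}_0))^2 f(X^{V_n}_0)]$ and rewriting the square using the Markov property and the semigroup $T^{V_n}_t$, the cross terms reassemble into expressions of the form $(u-T^{V_n}_t u, \cdot)$ and $(u^2-T^{V_n}_t u^2,\cdot)$. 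Dividing by $t$ and letting $t\downarrow 0$ converts these semigroup differences into the Dirichlet form $\mathcal{E}$ via the standard identity $\lim_{t\downarrow0}\tfrac1t(g-T^{V_n}_t g, w)=\mathcal{E}^{V_n}(g,w)$ for $g\in D(\mathcal{E})_{V_n}$, yielding exactly $2\mathcal{E}(u,uf)-\mathcal{E}(u^2,f)$.

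The hard part will be justifying the interchange of limits and the convergence of the semigroup difference quotients, particularly in controlling the contribution of the zero-energy additive functional $N^{n,[u]}$ and showing it does not affect the limit. Here I would use that $e^{V_n}(N^{n,[u]})=0$, i.e. the energy of $N^{n,[u]}$ vanishes, so that the terms involving $N^{n,[u]}$ contribute $o(t)$ after division by $t$; boundedness of $u$, $f$ and $\bar{h}_n$ on $V_n$ (guaranteed by working in $D(\mathcal{E})_{V_n,b}$ and by the boundedness of $\widetilde{\hat h}$ on $V_n$) supplies the integrability needed to apply dominated convergence. A second technical point is to pass from $f$ co-excessive (where Lemma \ref{lem:Revuz} directly applies) to general $f\in D(\mathcal{E})_{V_n,b}$; this is handled by writing a general $f$ as a difference of co-excessive functions in $D(\mathcal{E})_{V_n,b}$, using linearity of both sides in $f$. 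Since the entire argument is a direct transcription of the symmetric computation in \cite[Lemma 3.1]{MMS} to the part form $(\mathcal{E}^{V_n},D(\mathcal{E})_{V_n})$ with the reference function $\bar{h}_n$ in place of $\hat h$, I expect no new obstruction beyond bookkeeping the non-symmetry, which is already accommodated by the Revuz formula in Lemma \ref{lem:Revuz}.
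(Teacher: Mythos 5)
Your proposal is correct and follows essentially the same route as the paper, which proves this lemma only by reference (``Similar to [MMS, Lemma 3.1]''): polarization to the diagonal case, the Revuz characterization of $\mu^{(n)}_{\langle u\rangle}$ under the co-excessive reference density, expansion of $E[(\tilde u(X^{V_n}_t)-\tilde u(X^{V_n}_0))^2 f(X^{V_n}_0)]$ via the semigroup, and control of the zero-energy part. Your handling of general $f$ is also the right idea --- in [MMS] (and in the proof of Theorem \ref{th4.5} here) this is done concretely by approximating $f$ with the co-excessive functions $f_{k,l}=l\hat G^{V_n}_{l+1}(f\wedge k\bar h_n)$ and passing to the limit.
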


For $u\in {D(\mathcal{E})}_{V_n,b}$, we denote by $M^{n,[u],c}$
and $M^{n,[u],d}$ the continuous  and purely discontinuous parts
of $M^{n,[u]}$, respectively; and denote by $\mu^{n,c}_{<u>}$ and
$\mu^{n,d}_{<u>}$ the Revuz
 measures of $<M^{n,[u],c}>$ and $<M^{n,[u],d}>$, respectively. Then
$M^{n,[u]}=M^{n,[u],c}+M^{n,[u],d}$ and
\begin{equation}\label{n00}
\mu^{(n)}_{<u>}=\mu^{n,c}_{<u>}+\mu^{n,d}_{<u>}.
\end{equation}
 Let $(N^{(n)}(x,dy),H^{(n)})$ be a L\'evy system of $X^{V_n}$ and $\nu^{(n)}$ the
 Revuz measure of $H^{(n)}$. Define $K^{(n)}(dx):=N^{(n)}(x,\Delta)\nu^{(n)}(dx)$. Similar to \cite[(5.3.8) and (5.3.10)]{Fu94}, we can show that
  \begin{eqnarray}\label{l00}
<M^{n,[u],d}>_t&=&\left(\sum_{0<s\leq t}(\bigtriangleup M^{n,[u],d}_{s})^2\right)^p\nonumber\\
&=&\int_0^t\int_{V_n\cup\{\Delta\}}(\tilde u(x)-\tilde
u(y))^2N^{(n)}(X^{V_n}_s,\Delta)dH^{(n)}_s
  \end{eqnarray}
  and
    \begin{equation}\label{xc1}
\mu^{n,d}_{<u>}(dx)=\int_{V_n\cup\{\Delta\}}(\tilde u(x)-\tilde
u(y))^2N^{(n)}(x,dy)\nu^{(n)}(dx).
\end{equation}
   For $u,v\in D(\mathcal{E})_{V_n,b}$, we define
\begin{equation}\label{n02}
 \mu^{n,c}_{<u,v>}:=\frac{1}{2}(\mu^{n,c}_{<u+v>}-\mu^{n,c}_{<u>}-\mu^{n,c}_{<v>}),\ \ \mu^{n,d}_{<u,v>}:=\frac{1}{2}(\mu^{n,d}_{<u+v>}-\mu^{n,d}_{<u>}-\mu^{n,d}_{<v>}).
\end{equation}
\vskip 0.4cm
\begin{thm}\label{th4.5}
Let $u,v,w\in D(\mathcal{E})_{V_n,b}$. Then
\begin{eqnarray}\label{m00}
d\mu^{n,c}_{<uv,w>}=\tilde{u}d\mu^{n,c}_{<v,w>}+\tilde{v}d\mu^{n,c}_{<u,w>}.
\end{eqnarray}
\end{thm}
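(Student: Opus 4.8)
The plan is to follow the template of the derivation property for the continuous energy measure in the symmetric theory (\cite[Theorem 3.2.2]{Fu94}), adapted to the present non-symmetric setting, by separating a \emph{full} bracket from its \emph{jump} bracket. Set
$$
\Delta^{c} := d\mu^{n,c}_{<uv,w>} - \tilde u\, d\mu^{n,c}_{<v,w>} - \tilde v\, d\mu^{n,c}_{<u,w>},
$$
and define $\Delta^{d}$ and $\Delta$ by replacing $\mu^{n,c}$ by $\mu^{n,d}$ and by $\mu^{(n)}$ respectively. By (\ref{n00}) these defects are additive, $\Delta^{c}=\Delta-\Delta^{d}$, so (\ref{m00}) is equivalent to the single identity $\Delta=\Delta^{d}$, i.e. the full Leibniz defect coincides with the jump one. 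First I would record that $D(\mathcal{E})_{V_n,b}$ is an algebra: by Assumption \ref{assum1} a product of two of its members lies in $D(\mathcal{E})$, and being bounded and supported in $V_n$ it returns to $D(\mathcal{E})_{V_n,b}$; hence $uv,uw,vw,uvw$ and their products with a test function $f\in D(\mathcal{E})_{V_n,b}$ are all admissible in Lemma \ref{lem4.1}.

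Second I would compute the jump defect $\Delta^{d}$ by hand. Polarizing (\ref{xc1}) through (\ref{n02}) gives
$$
\mu^{n,d}_{<a,b>}(dx) = \int (\tilde a(x)-\tilde a(y))(\tilde b(x)-\tilde b(y))\, N^{(n)}(x,dy)\,\nu^{(n)}(dx).
$$
Inserting the elementary identity $\widetilde{uv}(x)-\widetilde{uv}(y) = \tilde u(x)(\tilde v(x)-\tilde v(y)) + \tilde v(y)(\tilde u(x)-\tilde u(y))$ into $\mu^{n,d}_{<uv,w>}$ and subtracting $\tilde u\, d\mu^{n,d}_{<v,w>}$ and $\tilde v\, d\mu^{n,d}_{<u,w>}$, the first-order terms cancel and one is left with
$$
\Delta^{d}(dx) = -\Big(\int (\tilde u(x)-\tilde u(y))(\tilde v(x)-\tilde v(y))(\tilde w(x)-\tilde w(y))\, N^{(n)}(x,dy)\Big)\nu^{(n)}(dx) =: -\,d\Lambda.
$$

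Third — and this is the heart of the matter — I would show the full defect $\Delta$ equals the same measure $-d\Lambda$. Testing against $\tilde f$ and applying Lemma \ref{lem4.1} three times (to $\mu^{(n)}_{<uv,w>}$ with test $f$, to $\mu^{(n)}_{<v,w>}$ with test $uf$, and to $\mu^{(n)}_{<u,w>}$ with test $vf$) turns $\int \tilde f\, d\Delta$ into a fixed alternating combination $D$ of values $\mathcal{E}(\cdot,\cdot)$. This combination is \emph{not} formal: it cannot be collapsed by Lemma \ref{lem4.1} alone, and one must feed in the actual structure of $\mathcal{E}$. I would do this via the approximations $\mathcal{E}(g,h)=\lim_{\beta\to\infty}\beta(g-\beta G_\beta g,h)$ together with the Radon measures $\sigma_\beta$ used in the proof of Proposition \ref{propO}, expanding each term of $D$ as a polynomial in the increments of $u,v,w,f$. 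Near the diagonal each term is quadratic in increments, but the alternating combination $D$ is arranged so that the quadratic (hence strongly local and drift) contributions cancel — this cancellation is precisely the origin of the derivation rule — leaving a cubic remainder that is negligible at the local scale; the off-diagonal part, where $\sigma_\beta\to J(dx,dy)=N^{(n)}(x,dy)\nu^{(n)}(dx)$, survives and reassembles into the triple increment, giving $\int\tilde f\, d\Delta=-\int\tilde f\, d\Lambda$. Since $\tilde f$ ranges over the quasi-continuous versions of an algebra rich enough to determine a smooth measure, this yields $\Delta=-d\Lambda$, and combined with the second step $\Delta^{c}=\Delta-\Delta^{d}=(-d\Lambda)-(-d\Lambda)=0$, which is (\ref{m00}).

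The main obstacle is this third step: establishing the cancellation of the near-diagonal contributions in $D$ and the convergence of the off-diagonal part to the triple-product measure. The non-symmetry of $\mathcal{E}$ is the extra difficulty relative to the symmetric case, since $\sigma_\beta$ is no longer symmetric; one must check that the antisymmetric (drift) part of the form also drops out of $D$ and that it is the symmetric co-jump kernel $N^{(n)}\nu^{(n)}$ that persists in the limit. The remaining technical labor is controlling the $\beta\to\infty$ passage — uniform integrability of the increment polynomials against $\sigma_\beta$ and the continuity of $J$ on the relevant off-diagonal sets, exactly as handled in Proposition \ref{propO}.
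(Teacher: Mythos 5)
Your first two steps coincide with the paper's own reduction: using (\ref{n00}) and the polarized form of (\ref{xc1})--(\ref{n02}), the claim is turned into an identity for the full measures $\mu^{(n)}$ with an explicit triple-increment jump correction, which is exactly the paper's identity (\ref{p00}); your computation $\Delta^{d}=-d\Lambda$ is correct (indeed (\ref{p00}) as printed carries the opposite sign on the $\eta^{(n)}_{u,w}$ term, which appears to be a slip in the orientation of the jump increments, and your sign is the consistent one). The genuine gap is your third step, which is where the entire content of the theorem sits and which you leave as a programme rather than a proof. The paper does \emph{not} establish the full-bracket identity by expanding the alternating combination of $\mathcal{E}$-values against the approximating measures $\sigma_\beta$. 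It proves it probabilistically: it writes $\int\tilde f\,d\mu^{(n)}_{<u^2,w>}$ as $\lim_{t\downarrow0}\frac{1}{t}E_{f_{k,l}\cdot m}[(\widetilde{u_k}^2(X^{V_n}_t)-\widetilde{u_k}^2(X^{V_n}_0))(\tilde w(X^{V_n}_t)-\tilde w(X^{V_n}_0))]$, splits off a term converging to $2\int\tilde f\tilde u\,d\mu^{(n)}_{<u,w>}$, and decomposes the remainder into five martingale cross-terms via $M=M^c+M^d$; four vanish by It\^{o}'s formula, Burkholder--Davis--Gundy and the key estimate $\lim_{t\downarrow0}\frac{1}{t}E_{f_{k,l}\cdot m}[<M^{n,[u_k]}>^2_t]=0$, and the fifth converges to the triple-increment measure through the L\'evy system.

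The reason your sketch of the third step does not amount to a proof is that the asserted cancellation of the near-diagonal quadratic contributions in $D$ is not an algebraic consequence of the combination produced by Lemma \ref{lem4.1}: those contributions converge to the strongly local part of the energy measures, and showing that they cancel in $D$ is precisely the derivation property being proved. In the symmetric regular case this is carried out in \cite[Section 3.2]{Fu94} using the spectral structure of $\mathcal{E}^{(\beta)}(g,h)=\beta(g-\beta G_\beta g,h)$ and the positivity of the approximating energy measures; in the semi-Dirichlet setting $\mathcal{E}^{(\beta)}$ is not symmetric, its approximating ``energy measures'' have no sign, and $\beta G_\beta 1\neq 1$ introduces killing terms, so none of that machinery transfers, and the off-diagonal convergence of $\frac{\beta}{2}\sigma_\beta$ to $J$ is only available on continuous sets and in a one-sided form (as the inequality chain in the proof of Proposition \ref{propO} already shows). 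You correctly identify this as ``the main obstacle,'' but identifying it is not overcoming it. You also omit the regularizations that make the paper's error terms controllable --- replacing $u$ by Ces\`aro means $u_k$ of $kR^{V_n}_{k+1}u$ so that $L^{V_n}u_k$ is bounded (which yields (\ref{addfg})), and replacing $f$ by $f_{k,l}$ --- some analogue of which would be indispensable for the uniform integrability you defer. As it stands, the proposal reproduces the routine algebra and postpones the analysis that constitutes the theorem.
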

\begin{proof} The argument for the proof of this theorem is similar to that of \cite[Theorem 3.2]{MMS}.
We will only emphasize the differences caused by the jump part.

By quasi-homeomorphism (cf. \cite[Theorem 3.8]{HC06}) and the
polarization identity, (\ref{m00}) holds for $u,v,w\in
D(\mathcal{E})_{V_n,b}$ is equivalent to
\begin{eqnarray}\label{e4.28*}
\int_{V_n} \tilde{f}d\mu^{n,c}_{<u^2,w>}=2\int_{V_n}
\tilde{f}\tilde{u}d\mu^{n,c}_{<u,w>},\ \ \forall f,u,w\in
D(\mathcal{E})_{V_n,b}.
\end{eqnarray}
For $u,w\in D(\mathcal{E})_{V_n,b}$, we define
$$\eta^{(n)}_{u,w}(dx)=\int_{V_n\cup\{\Delta\}}(\tilde{u}(x)-\tilde{u}(y))^2(\tilde{w}(x)-\tilde{w}(y))N^{(n)}(x,dy)\nu^{(n)}(dx).$$
Then, by (\ref{p01})-(\ref{n02}), we find that (\ref{e4.28*}) is
equivalent to
\begin{eqnarray}\label{p00}
\int_{V_n} \tilde{f}d\mu^{(n)}_{<u^2,w>}=2\int_{V_n}
\tilde{f}\tilde{u}d\mu^{(n)}_{<u,w>}+\int_{V_n}\tilde{f}d\eta^{(n)}_{u,w},\
\ \forall f,u,w\in D(\mathcal{E})_{V_n,b}.\ \
\end{eqnarray}

For $k\in \mathbb{N}$, we define $v_k:=kR^{V_n}_{k+1}u$. Then
$v_k\rightarrow u$ in $D(\mathcal{E})_{V_n}$ as
$k\rightarrow\infty$. By Assumption \ref{assum1} and
\cite[Corollary I.4.15]{MR92}, we can show that
$\sup_{k\geq1}\mathcal{E}(v_kw,v_kw)<\infty$. Then, by \cite[Lemma
I.2.12]{MR92}, there exists a subsequence
$\{(v_{k_l})\}_{l\in\mathbb{N}}$ of $\{v_k\}_{k\in\mathbb{N}}$
such that $u_kw\rightarrow uw$ in $D(\mathcal{E})_{V_n}$ as
$k\rightarrow\infty$, where $u_k:=\frac{1}{k}\sum_{l=1}^kv_{k_l}$.
Note that $u_k\rightarrow u$ in $D(\mathcal{E})_{V_n}$ as
$k\rightarrow\infty$  and $\|u_k\|_{\infty}\le \|u\|_{\infty}$ for
$k\in \mathbb{N}$. Moreover, $\|L^{V_n}u_k\|_{\infty}<\infty$ for
$k\in \mathbb{N}$, where $L^{V_n}$ is the generator of $X^{V_n}$.
For $k,l\in \mathbb{N}$, we define $f_{k}:=f\wedge (k\bar{h}_n)$
and $f_{k,l}:=l\hat{G}^{V_n}_{l+1}f_k$.

Similar to \cite[Theorem 3.2]{MMS}, to prove (\ref{p00}), we may
assume without loss of generality that $f\ge 0$, $u=u_k$ and
$f=f_{k,l}$.

For $0<\delta<1$, we have
\begin{eqnarray*}\label{n11}
& &\lim_{t\downarrow0}\frac{1}{t}E_{f_{k,l}\cdot m}[<M^{n,[u_k]}>^2_t]\nonumber\\
&&\ \ \ \ =\lim_{t\downarrow0}\frac{2}{t}E_{f_{k,l}\cdot m}\left[\int_0^t<M^{n,[u_k]}>_{(t-s)}\circ\theta_s d<M^{n,[u_k]}>_s\right]\nonumber\\
&&\ \ \ \ =\lim_{t\downarrow0}\frac{2}{t}E_{f_{k,l}\cdot m}\left[\int_0^tE_{X^{V_n}_s}[<M^{n,[u_k]}>_{(t-s)}]d<M^{n,[u_k]}>_s\right]\nonumber\\
&&\ \ \ \
\le2<E_{\cdot}[<M^{n,[u_k]}>_{\delta}]\cdot\mu^{(n)}_{<u_k>},\widetilde{f_{k,l}}>.
\end{eqnarray*}
Note that by our choice of $u_k$, there exists a constant $D_k>0$
such that
$E_x(<M^{n,[u_k]}>_{\delta})=E_x[(M^{n,[u_k]}_{\delta})^2]=
E_x[(\widetilde{u_k}(X^{V_n}_{\delta})-\widetilde{u_k}(X^{V_n}_0)-\int_0^{\delta}L^{V_n}u_k(X^{V_n}_s)ds)^2]\le
D_k$ for ${\cal E}{\textrm{-}q.e.}\ x\in V_n$. Letting
$\delta\rightarrow 0$, we obtain by the dominated convergence
theorem that \begin{equation}\label{addfg}
\lim_{t\downarrow0}\frac{1}{t}E_{f_{k,l}\cdot
m}[<M^{n,[u_k]}>^2_t]=0.
\end{equation}

We have
\begin{eqnarray*}
\int_{V_n} \widetilde{f_{k,l}}d\mu^{(n)}_{<u_k^2,w>}&=&\lim_{t\downarrow0}{1\over t}E_{f_{k,l}\cdot m}[<M^{n,[u_k^2]},M^{n,[w]}>_t]\nonumber\\
&=&\lim_{t\downarrow0}\frac{1}{t}E_{f_{k,l}\cdot m}[(\widetilde{u_k}^2(X^{V_n}_t)-\widetilde{u_k}^2(X^{V_n}_0))(\tilde{w}(X^{V_n}_t)-\tilde{w}(X^{V_n}_0))]\nonumber\\
&=&\lim_{t\downarrow0}\frac{2}{t}E_{(f_{k,l}u_k)\cdot m}[(\widetilde{u_k}(X^{V_n}_t)-\widetilde{u_k}(X^{V_n}_0))(\tilde{w}(X^{V_n}_t)-\tilde{w}(X^{V_n}_0))]\nonumber\\
  &&+\lim_{t\downarrow0}\frac{1}{t}E_{f_{k,l}\cdot m}[(\widetilde{u_k}(X^{V_n}_t)-\widetilde{u_k}(X^{V_n}_0))^2(\tilde{w}(X^{V_n}_t)-\tilde{w}(X^{V_n}_0))]\nonumber\\
&:=&\lim_{t\downarrow0}[{I(t)+II(t)}].
\end{eqnarray*}

Similar to \cite[Theorem 3.2]{MMS}, we can show that
$$
\lim_{t\downarrow0}I(t)=2\int_{V_n}\widetilde{f_{k,l}}\widetilde{u_k}d\mu^{(n)}_{<u_k,w>}.
$$

Note that
\begin{eqnarray*}
\lim_{t\downarrow0}II(t)
&=&\lim_{t\downarrow0}\frac{1}{t}E_{f_{k,l}\cdot m}[(M^{n,[u_k],c}_t)^2M^{n,[w]}_t]\nonumber\\
&&+2\lim_{t\downarrow0}\frac{1}{t}E_{f_{k,l}\cdot m}[(M^{n,[u_k],c}_t)(M^{n,[u_k],d}_t)M^{n,[w],c}_t]\nonumber\\
&&+2\lim_{t\downarrow0}\frac{1}{t}E_{f_{k,l}\cdot m}[(M^{n,[u_k],c}_t)(M^{n,[u_k],d}_t)M^{n,[w],d}_t]\nonumber\\
&&+\lim_{t\downarrow0}\frac{1}{t}E_{f_{k,l}\cdot m}[(M^{n,[u_k],d}_t)^2M^{n,[w],c}_t]\nonumber\\
  &&+\lim_{t\downarrow0}\frac{1}{t}E_{f_{k,l}\cdot m}[(M^{n,[u_k],d}_t)^2M^{n,[w],d}_t]\nonumber\\
   &:=&\lim_{t\downarrow0}\{III_{1}(t)+2III_{2}(t)+2III_{3}(t)+III_{4}(t)+IV(t)\}.
  \end{eqnarray*}
Similar to \cite[Theorem 3.2]{MMS}, we can show that
\begin{equation}\label{q111}\lim_{t\downarrow0}III_{1}(t)=0.
\end{equation}
By It$\hat{\rm o}$'s formula and the orthogonality of the
continuous and purely discontinuous martingales, we get
\begin{eqnarray*}
\lim_{t\downarrow0}|III_{2}(t)|&\leq&\left\{\lim_{t\downarrow0}\frac{1}{t}E_{f_{k,l}\cdot m}[<M^{n,[u_k],c},M^{n,[w],c}>^{2}_{t}]\right\}^{\frac{1}{2}}\\
& &\ \ \cdot\left\{\lim_{t\downarrow0}\frac{1}{t}E_{f_{k,l}\cdot
m}[M^{n,[u_k],d}_t]^{2}\right\}^{\frac{1}{2}}.
\end{eqnarray*}
Similar to (\ref{q111}), we can show that
$\lim_{t\downarrow0}III_{2}(t)=0$.

By It$\hat{\rm o}$'s formula and Burkholder-Davis-Gundy
inequality, we get
\begin{eqnarray*}
\lim_{t\downarrow0}|III_{4}(t)|&=&\lim_{t\downarrow0}\left|\frac{1}{t}E_{f_{k,l}\cdot m}\left\{\sum_{0<s\leq t}M^{n,[w],c}_s(\bigtriangleup M^{n,[u_k],d}_s)^2\right\}\right|\\
&&=\lim_{t\downarrow0}\left|\frac{1}{t}E_{f_{k,l}\cdot m}\left\{\int_{0}^{t}M^{n,[w],c}_sd<M^{n,[u_k],d}>_s\right\}\right|\\
&&\leq\lim_{t\downarrow0}\frac{1}{t}E_{f_{k,l}\cdot m}\{M^{n,[w],c \ast}_t<M^{n,[u_k],d}>_t\}\\
&&\leq\left\{\lim_{t\downarrow0}\frac{1}{t}E_{f_{k,l}\cdot m}(M^{n,[w],c \ast}_t)^2\right\}^{\frac{1}{2}}\left\{\lim_{t\downarrow0}\frac{1}{t}E_{f_{k,l}\cdot m}(<M^{n,[u_k],d}>_t)^2\right\}^{\frac{1}{2}}\\
&&\leq C\left\{\lim_{t\downarrow0}\frac{1}{t}E_{f_{k,l}\cdot m}(M^{n,[w],c}_t)^2\right\}^{\frac{1}{2}}\left\{\lim_{t\downarrow0}\frac{1}{t}E_{f_{k,l}\cdot m}(<M^{n,[u_k],d}>_t)^2\right\}^{\frac{1}{2}}\\
&&=C\left\{\lim_{t\downarrow0}\frac{1}{t}E_{f_{k,l}\cdot
m}<M^{n,[w],c}_t>\right\}^{\frac{1}{2}}\left\{\lim_{t\downarrow0}\frac{1}{t}E_{f_{k,l}\cdot
m}(<M^{n,[u_k],d}>_t)^2\right\}^{\frac{1}{2}},
\end{eqnarray*}
where $M^{n,[w],c \ast}_t$ denotes the maximum of $M^{n,[w],c}_t$,
$\bigtriangleup
M^{n,[u_k],d}_s=M^{n,[u_k],d}_s-M^{n,[u_k],d}_{s-}$and $C$  is a
positive constant. Hence $\lim_{t\downarrow0}III_{4}(t)=0$.
Similarly, we can show that $\lim_{t\downarrow0}III_{3}(t)=0$.

Finally, we estimate $IV(t)$. By It$\hat{\rm o}$'s formula and the
dual predictable projection, we get
\begin{eqnarray*}
IV(t)&=&\frac{1}{t}E_{f_{k,l}\cdot m}(M^{n,[u_k],d}_t)^2M^{n,[w],d}_t\\
&=&\frac{1}{t}E_{f_{k,l}\cdot m}\left\{\sum_{0<s\leq t}(M^{n,[u_k],d}_s)^2M^{n,[w],d}_s-(M^{n,[u_k],d}_{s-})^2M^{n,[w],d}_{s-}\right.\\
&&\left.-2M^{n,[u_k],d}_{s-}M^{n,[w],d}_{s-}(M^{n,[u_k],d}_s-M^{n,[u_k],d}_{s-})-
(M^{n,[u_k],d}_{s-})^2(M^{n,[w],d}_s-M^{n,[w],d}_{s-})\right\}\\
&=&\frac{1}{t}E_{f_{k,l}\cdot m}\left\{\sum_{0<s\leq t}(\bigtriangleup M^{n,[u_k],d}_s)^2\bigtriangleup M^{n,[w],d}_s\right.\\
&&\left.+\sum_{0<s\leq t}M^{n,[w],d}_{s-}(\bigtriangleup M^{n,[u_k],d}_s)^2+\sum_{0<s\leq t}M^{n,[u_k],d}_{s-}\bigtriangleup M^{n,[u_k],d}_s\bigtriangleup M^{n,[w],d}_s\right\}\\
&=&\frac{1}{t}E_{f_{k,l}\cdot m}\left\{\int_0^t\int_{V_n\cup\{\Delta\}}(u_k(X^{V_n}_s)-u_k(y))^2(w(X^{V_n}_s)-w(y))N^{(n)}(X^{V_n}_s,dy)dH^{(n)}_s\right.\\
&&\left.+\sum_{0<s\leq t}M^{n,[w],d}_{s-}(\bigtriangleup M^{n,[u_k],d}_s)^2+\sum_{0<s\leq t}M^{n,[u_k],d}_{s-}\bigtriangleup M^{n,[u_k],d}_s\bigtriangleup M^{n,[w],d}_s\right\}\\
&:=&IV_1(t)+IV_2(t)+IV_3(t).
\end{eqnarray*}
We have
\begin{eqnarray*}
\lim_{t\downarrow0}IV_1(t)=\int_{V_n}f_{k,l}d\eta^{(n)}_{u_k,w}
\end{eqnarray*}
and, by Lemma \ref{thm3.6} and (\ref{addfg}),
\begin{eqnarray*}
\lim_{t\downarrow0}|IV_2(t)|&=&\lim_{t\downarrow0}\left|\frac{1}{t}E_{f_{k,l}\cdot m}\left\{\int^{t}_{0}M^{n,[w],d}_{s-}d<M^{n,[u_k],d}>_{s}\right\}\right|\\
&\leq&\lim_{t\downarrow0}\frac{1}{t}E_{f_{k,l}\cdot m}\{(M^{n,[w],d}_{t})^{*}<M^{n,[u_k],d}>_{t}\}\\
&\leq&\left\{\lim_{t\downarrow0}\frac{1}{t}E_{f_{k,l}\cdot m}<M^{n,[w],d}>_t\right\}^{\frac{1}{2}}\left\{\lim_{t\downarrow0}\frac{1}{t}E_{f_{k,l}\cdot m}<M^{n,[u_k],d}>_{t}^2\right\}^{\frac{1}{2}}\\
&=&0,
\end{eqnarray*}
where $M^{n,[w],d \ast}_t$ denotes the maximum of $M^{n,[w],d}_t$.
Similarly, we get $\lim_{t\downarrow0}IV_3(t)=0$. Therefore, the
proof is complete.
\end{proof}
\noindent{\bf Proof of Theorem \ref{Ito}} By virtue of Theorem
\ref{th4.5}, following the argument of the proof of \cite[Theorem
3.10]{MMS}, we can prove Theorem \ref{Ito}. We omit the details
here.

\hfill\fbox

\section[short
title]{Examples}\label{Sec:example}

In this section, we consider some concrete examples. Note that our
Theorems \ref{thm3.2} and \ref{Ito} are generalization of the
corresponding results of \cite{MMS}, which were only given for
local semi-Dirichlet forms without jump.

\begin{exa}\label{e3}(see \cite{Fu12} and cf. also \cite{SW}) Let $(E,d)$ be a locally compact separable metric space,
m a positive Radon Measure on $E$ with full topological support,
and $k(x,y)$  a nonnegative  Borel measurable function on
$\{(x,y)\in E\times E\,|\,x\not=y \}$. Set
$k_{s}(x,y)=\frac{1}{2}(k(x,y)+k(y,x))$ and
$k_{a}(x,y)=\frac{1}{2}(k(x,y)-k(y,x)) $. Denote by
$C^{lip}_{0}(E)$ the family of all uniformly Lipschitz continuous
functions on $E$ with compact support. Suppose that the following
conditions hold:

\noindent (A.I) $x\rightarrow\int_{y\neq x}(1\wedge
d(x,y)^{2})k_{s}(x,y)m(dy)\in L^{1}_{loc}(E;m)$.

\noindent (A.II) $\sup_{x\in E}\int_{\{y:\,k_s(x,y)\not=0\}}
\frac{k_a^2(x,y)}{k_s(x,y)}m(dy)<\infty$.

Define for $u,v\in C^{lip}_{0}(E)$,
$$
\eta(u,v)=\int\hskip-0.2cm\int_{x\not=y}(u(x)-u(y))(v(x)-v(y))k_s(x,y)m(dx)m(dy)
$$
and
$$
{\cal
E}(u,v)=\frac{1}{2}\eta(u,v)+\int\hskip-0.2cm\int_{x\not=y}(u(x)-u(y))v(y)k_a(x,y)m(dx)m(dy).$$
Then, there exists $\alpha>0$ such that $({\cal
E}_{\alpha},C^{lip}_{0}(E))$ is closable on $L^2(E;m)$ and its
closure $({\cal E}_{\alpha},D({\cal E}_{\alpha}))$ is a regular
semi-Dirichlet form on $L^2(E;m)$. Moreover, there exists $C>1$
such that for any $u\in D(\mathcal{E}_{\alpha})$,
\begin{eqnarray*}
\frac{1}{C}\eta_{\alpha}(u,u)\leq {\cal E}_{\alpha}(u,u)\leq
C\eta_{\alpha}(u,u).
\end{eqnarray*}
Therefore, our Theorems \ref{thm3.2} and \ref{Ito} hold for any
$u\in {D({\cal E})}_{loc}$ which satisfies Condition (S), in
particular, for any $u\in {D({\cal E})}$ by noting that
$|k_a(x,y)|\le k_s(x,y)$.
\end{exa}

\begin{exa}\label{e3}(see \cite{U}) Let $G$ be an open set of $\mathbb{R}^d$. Suppose that the following
conditions hold:

\noindent (B.I) There exist $0<\lambda\le \Lambda$ such that
$$
\lambda|\xi|^2\le \sum_{i,j=1}^da_{ij}(x)\xi_i\xi_j\le
\Lambda|\xi|^2\ \ {\rm for}\ x\in G,\ \xi\in\mathbb{R}^d.
$$

\noindent (B.II) $b_i\in L^{d}(G;dx)$, $i=1,\dots,d$.

\noindent (B.III) $c\in L^{d/2}_+(G;dx)$.

\noindent (B.IV)
$x\rightarrow\int_{y\not=x}(1\wedge|x-y|^2)k_s(x,y)dy\in
L^1_{loc}(G;dx)$.

\noindent (B.V) $ \sup_{x\in G}\int_{\{|x-y|\ge 1,y\in
G\}}|k_a(x,y)|dy<\infty$, $ \sup_{x\in G}\int_{\{|x-y|< 1,y\in
G\}}|k_a(x,y)|^{\gamma}dy<\infty$ for some $0<\gamma\le 1$, and
$|k_a(x,y)|^{2-\gamma}\le C_1k_s(x,y)$, $x,y\in G$ with
$0<|x-y|<1$ for some constant $C_1>0$.

Define for $u,v\in C^1_{0}(G)$,
\begin{eqnarray*}
\eta(u,v)&=&\frac{1}{2}\sum_{i=1}^d\int_G\frac{\partial
u}{\partial x_i}(x)\frac{\partial v}{\partial
x_i}(x)dx\\
& &\ \ \ \
+\frac{1}{2}\int\hskip-0.2cm\int_{x\not=y}(u(x)-u(y))(v(x)-v(y))k_s(x,y)dxdy
\end{eqnarray*}
and
\begin{eqnarray*}
{\cal
E}(u,v)&=&\frac{1}{2}\sum_{i=1}^d\int_Ga_{ij}(x)\frac{\partial
u}{\partial x_i}(x)\frac{\partial v}{\partial
x_j}(x)dx+\sum_{i=1}^d\int_Gb_{i}(x)u(x)\frac{\partial v}{\partial
x_i}(x)dx\\
& &\ \ \ \ +\int_Gu(x)v(x)c(x)dx\\
& &\ \ \ \ +
\frac{1}{2}\int\hskip-0.2cm\int_{x\not=y}(u(x)-u(y))(v(x)-v(y))k_s(x,y)dxdy\\
& &\ \ \ \
+\int\hskip-0.2cm\int_{x\not=y}(u(x)-u(y))v(x)k_a(x,y)dxdy.
\end{eqnarray*}
Then, when $\lambda$ is sufficiently large, there exists
$\alpha>0$ such that $({\cal E}_{\alpha},C^1_{0}(G))$ is closable
on $L^2(G;dx)$ and its closure $({\cal E}_{\alpha},D({\cal
E}_{\alpha}))$ is a regular semi-Dirichlet form on $L^2(G;dx)$.
Moreover, there exists $C'>1$ such that for any $u\in
D(\mathcal{E}_{\alpha})$,
\begin{eqnarray*}
\frac{1}{C'}\eta_{\alpha}(u,u)\leq {\cal E}_{\alpha}(u,u)\leq
C'\eta_{\alpha}(u,u).
\end{eqnarray*}
Therefore, our Theorems \ref{thm3.2} and \ref{Ito} hold for any
$u\in {D({\cal E})}_{loc}$ which satisfies Condition (S), in
particular, for any $u\in {D({\cal E})}$ by noting that
$|k_a(x,y)|\le k_s(x,y)$.
\end{exa}

\bigskip

\noindent \textbf{Author information} \vskip 0.2cm
\noindent Zhi-Ming Ma, Academy of Mathematics and Systems Science \\
Chinese Academy of Sciences \\
No.55, Zhong-guan-cun East Road,\\
Beijing, 100190, China \\
             E-mail: mazm@amt.ac.cn

\vskip 0.2cm
\noindent Wei Sun, Department of Mathematics and Statistics \\
Concordia University \\
Montreal, H3G 1M8, Canada \\
             E-mail: wei.sun@concordia.ca

\vskip 0.2cm
\noindent Li-Fei Wang, College of Mathematics and Information Science \\
Hebei Normal University \\
Shijiazhuang, 050024, China\\
             E-mail: flywit1986@163.com


\begin{thebibliography}{99}



\bibitem{Chen0} \textsc{C.Z. Chen, L. Ma and W. Sun},
Stochastic calculus for Markov processes associated with
non-symmetric Dirichlet forms, {Sci. China Math.} {55} (2012),
2195-2203.

\bibitem{Chen} \textsc{Z.Q. Chen, P.J. Fitzsimmons, K.
Kuwae and T.S. Zhang}, Stochastic calculus for symmetric Markov
processes, {Ann. Probab.} {36} (2008), 931-970.

\bibitem{CF} \textsc{Z.Q. Chen and M. Fukushima}, {Symmetric Markov processes, time change, and boundary theory}, Princeton University Press, 2012.

\bibitem{CMR} \textsc{Z.Q. Chen, Z.M. Ma and M. R\"ockner},
Quasi-homeomorphisms of Dirichlet forms, {Nagoya Math. J.} { 136}
(1994), 1-15.

\bibitem{Fu79} \textsc{M. Fukushima}, A decomposition of additive functionals of finite energy, { Nagoya Math. J.} { 74} (1979), 137-168.

\bibitem{Fu94} \textsc{M. Fukushima, Y. Oshima and M. Takeda}, {Dirichlet forms
               and symmetric Markov processes}, second revised and extended edition, Walter de Gruyter, 2011.

\bibitem{Fu12} \textsc{M. Fukushima and T. Uemura}, Jump-type Hunt processes generated by lower bounded semi-Dirichlet forms,
 {Ann. Probab.} {40} (2012), 858-889.
\bibitem{HMS} \textsc{X.F. Han, Z.M. Ma and W. Sun},  $h\hat{h}$-transforms of positivity preserving semigroups and associated Markov processes,
 { Acta Math. Sin. (Engl. Ser.)} { 27}  (2011), 369-376.

\bibitem{HWY} \textsc{S.W. He, J.G. Wang and J.A. Yan}, {Semimartingale theory and stochastic
calculus}, Science Press, Beijing, 1992.

\bibitem{HC06} \textsc{Z.C. Hu, Z.M. Ma and W. Sun},  Extensions of L\'{e}vy-Khintchine formula
and Beurling-Deny formula in semi-Dirichlet forms setting,  { J.
Funct. Anal.} {239} (2006), 179-213.


\bibitem{K} \textsc{O. Kallenberg}, { Foundations of modern probability}, second edition, Springer, 2002.


\bibitem{kuwae} \textsc{K. Kuwae}, Maximum principles for subharmonic functions via local semi-Dirichlet
forms, { Can. J. Math.} { 60} (2008), 822-874.

\bibitem{kuwae2010} \textsc{K. Kuwae}, Stochastic calculus over symmetric Markov processes without time reversal, { Ann. Probab.} { 38} (2010), 1532-1569.

\bibitem{MMS} \textsc{L. Ma, Z.M. Ma and W. Sun}, Fukushima's decomposition for diffusions associated with semi-Dirichlet
forms, {Stoch. Dyn.} { 12} (2012), 1250003.

\bibitem{MR95} \textsc{Z.M. Ma, L. Overbeck and  M. R\"ockner}, Markov processes associated
               with semi-Dirichlet forms, { Osaka J. Math.} { 32} (1995), 97-119.

    \bibitem{MR92} \textsc{Z.M. Ma and M. R\"ockner}, { Introduction to the theory
              of (non-symmetric) Dirichlet forms}, Springer-Verlag, 1992.


\bibitem{MR1995} \textsc{Z.M. Ma and M. R\"{o}ckner}, Markov processes associated with positivity preserving
               coercive forms, { Can. J. Math.} { 47} (1995), 817-840.


\bibitem{oshima} \textsc{Y. Oshima}, {Lecture on Dirichlet Spaces}, Univ. Erlangen-N\"urnberg, 1988.

\bibitem{oshima13} \textsc{Y. Oshima}, {Semi-Dirichlet forms and Markov processes},  Walter de Gruyter,
2013.



\bibitem{SW} \textsc{R.L.  Schilling and J. Wang}, Lower bounded semi-Dirichlet forms associated with L\'evy type operators,
{arXiv:1108.3499} (2012).

\bibitem{WSta} \textsc{W. Stannat}, {The theory of generalized Dirichlet forms and its applications in analysis and stochastics}, Memoirs of the AMS,
Vol. 142, No. 678, 1999.

\bibitem{Sun} \textsc{W. Sun}, The convergence theory of Dirichlet forms and some topics concerning generalized Dirichlet forms, Ph.D. thesis,
Chinese Academy of Sciences, 1998.

\bibitem{TZ} \textsc{M. Takeda and T.S. Zhang}, Asymptotic properties of additive functionals of Brownian motion, { Ann. Probab.} { 25} (1997), 940-952.

\bibitem{T} \textsc{G. Trutnau}, Stochastic calculus of generalized Dirichlet forms and applications to stochastic differential
equations in infinite dimensions, { Osaka J. Math.} { 37} (2000),
315-343.

    \bibitem{U} \textsc{T. Uemura}, On multidimensional diffusion processes with jumps, {Osaka
J. Math.}, to appear.

\bibitem{W} \textsc{L.F. Wang}, Fukushima's decomposition of semi-Dirichlet forms and some related
research, Ph.D. thesis, Chinese Academy of Sciences, 2013.

\end{thebibliography}
\end{document}